 \newtheorem{theorem}{Theorem}
 \newtheorem{corollary}[theorem]{Corollary}
 \newtheorem{lemma}[theorem]{Lemma}
 \newtheorem{definition}[theorem]{Definition}
 \newtheorem{remark}[theorem]{Remark}
\newcommand{\toto}{\rightrightarrows}
\newcommand{\R}{{\mathbb R}}
\newcommand{\oR}{{\overline{\mathbb R}}}
\newcommand{\To}{\longrightarrow}
\newcommand{\p}{\partial}
\def\1{\^{\i}}
\def\2{\u{a}}
\def\3{\c{s}}
\def\4{\^{a}}
\def\5{\c{t}}
\def\b{\beta}
\def\e{\epsilon}
\def\g{\gamma}
\def\l{\lambda}
\def\<{\langle}
\def\>{\rangle}
\DeclareMathOperator*\dom{dom}
\DeclareMathOperator*\id{Id}
\DeclareMathOperator*\argmin{argmin}
\DeclareMathOperator*\prox{prox}
\DeclareMathOperator*\ran{ran}%
\DeclareMathOperator*\zer{zer}
\DeclareMathOperator*\gr{Gr}%
\begin{document}



              
\title{Second order dynamical systems with penalty terms associated to monotone inclusions}

\author{Radu Ioan Bo\c{t} \thanks{University of Vienna, Faculty of Mathematics, Oskar-Morgenstern-Platz 1, A-1090 Vienna, Austria, 
email: radu.bot@univie.ac.at. Research partially supported by FWF (Austrian Science Fund), project I 2419-N32.} \and 
Ern\"{o} Robert Csetnek \thanks {University of Vienna, Faculty of Mathematics, Oskar-Morgenstern-Platz 1, A-1090 Vienna, Austria, 
email: ernoe.robert.csetnek@univie.ac.at. Research supported by FWF (Austrian Science Fund), Lise Meitner Programme, project M 1682-N25.} 
 \and Szil\'{a}rd Csaba L\'{a}szl\'{o} \thanks{Technical University of Cluj-Napoca, Department of Mathematics, 
 Str. Memorandumului nr. 28, 400114 Cluj-Napoca, Romania, e-mail: laszlosziszi@yahoo.com}}              
\maketitle

\noindent \textbf{Abstract.} In this paper we investigate in a Hilbert space setting a second order dynamical system of the form 
$$\ddot{x}(t)+\g(t)\dot{x}(t)+x(t)-J_{\l(t) A}\big(x(t)-\l(t) D(x(t))-\l(t)\b(t)B(x(t))\big)=0,$$
where $A:{\mathcal H}\toto{\mathcal H}$ is a maximal monotone operator, $J_{\l(t) A}:{\mathcal H}\To{\mathcal H}$ is the resolvent operator of $\l(t)A$ and $D,B: {\mathcal H}\rightarrow{\mathcal H}$ are cocoercive operators, and $\lambda,\beta :[0,+\infty)\rightarrow (0,+\infty)$, and  $\gamma:[0,+\infty)\rightarrow (0,+\infty)$ are step size, penalization and, respectively, damping functions, all depending on time. We show the existence and uniqueness of
strong global solutions in the framework of the Cauchy-Lipschitz-Picard Theorem and prove ergodic asymptotic convergence for 
the generated trajectories to a zero of the operator $A+D+{N}_C,$ where $C=\zer B$ and $N_C$ denotes the normal cone operator of $C$. To this end we use Lyapunov analysis combined with the celebrated Opial Lemma in its ergodic continuous version. Furthermore, 
we show strong convergence for trajectories to the unique zero of $A+D+{N}_C$, provided that $A$ is a strongly monotone operator. 
\vspace{1ex}

\noindent \textbf{Key Words.} dynamical systems, Lyapunov analysis, monotone inclusions, penalty schemes \vspace{1ex}

\noindent \textbf{AMS subject classification.} 34G25, 47J25, 47H05, 90C25

\section{Introduction and preliminaries}\label{sec1}

Consider the bilevel optimization problem
\begin{equation}\label{e1}
\inf_{x\in \argmin \psi} \{f(x)+g(x)\},
\end{equation}
where $f :{\mathcal H}\To\R\cup\{+\infty\}$ is a proper convex and lower semicontinuous function, 
$g,\psi:{\mathcal H}\To\R$ are convex and (Fr\'echet) differentiable functions both with Lipschitz continuous gradients, and $\argmin \psi$ denotes the set of global minimizers of $\psi$, assumed to be nonempty.

By making use of the indicator function of $\argmin \psi$, $\eqref{e1}$ can be rewritten as
\begin{equation}\label{e2}
\inf_{x\in {\mathcal H}} \{f(x)+g(x)+\delta_{ \argmin \psi}(x)\}.
\end{equation}
Obviously, $x\in\argmin\psi$ is an optimal solution of $(\ref{e2})$ if and only if 
$0\in \p(f+g+\delta_{ \argmin \psi})(x)$, which can be split in
\begin{equation}\label{e3}
0\in \p f(x)+\nabla g(x)+\p\delta_{ \argmin \psi}(x),
\end{equation}
provided a suitable qualification condition which guarantees the subdifferential sum rule holds.

Using that $\p\delta_{ \argmin \psi}(x)={N}_{\argmin \psi}(x)$ and $\argmin\psi=\zer \nabla \psi$,  
$\eqref{e3}$  is nothing else than
\begin{equation}\label{e31}
0\in \p f(x)+\nabla g(x)+N_{ \zer \nabla \psi}(x).
\end{equation}

This motivates us to investigate the following inclusion problem
\begin{equation}\label{e4}
0\in Ax+Dx+{N}_C(x),
\end{equation}
where $A:{\mathcal H}\toto {\mathcal H}$ is a maximally monotone operator, $D:{\mathcal H}\To {\mathcal H}$ is a  $L_D^{-1}$-cocoercive operator, 
and $B:{\mathcal H}\To {\mathcal H}$ is a $L_B^{-1}$-cocoercive operator with $L_D, L_B>0$, $C=\zer B$, and $N_C$ denotes the normal cone operator 
of the set $C$. We recall that by the classical Baillon-Haddad Theorem, the gradient of a convex and (Fr\'echet) differentiable function is 
$L$-Lipschitz continuous, for $L>0$, if and only if it is $L^{-1}$-cocoercive, see for instance \cite[Corollary 18.16]{bauschke-book}). 

In \cite{BCs}, a first order dynamical system has been assigned to the monotone inclusion $(\ref{e4})$, and it has been shown that the generated trajectories converge to a solution of it. In this paper, we 
assign to $(\ref{e4})$ the following second order dynamical system 
\begin{equation}\label{e5}
\left\{
\begin{array}{ll}
\ddot{x}(t)+\g(t)\dot{x}(t)+x(t)=J_{\l(t) A}\big(x(t)-\l(t) D(x(t))-\l(t)\b(t)B(x(t))\big)\\
x(0)=u_0,\,\dot{x}(0)=v_0,
\end{array}
\right.
\end{equation}
where $u_0,v_0\in {\mathcal H}$ and $\g,\l,\b:[0,+\infty)\To(0,+\infty).$ Dynamical systems 
governed by resolvents of maximally monotone operators have been considered in \cite{abbas-att-arx14, abbas-att-sv}, and then
further developed in \cite{b-c-dyn-KM, b-c-dyn-sec-ord}. 

The study of second order dynamical systems is motivated by the fact that the presence of the acceleration term $\ddot x(t)$ can lead to 
better convergence properties of the trajectories. Time discretizations of second order dynamical systems give usually rise to numerical 
algorithms with inertial terms which have been shown to have improved convergence properties (see \cite{Ne}). The geometric damping function 
$\gamma$ which acts on the velocity can in some situations accelerate the 
asymptotic properties of the orbits, as emphasized for example in \cite{su-boyd-candes}. 

For $B=0$ and $\lambda$ is constant, the differential equation \eqref{e5} becomes the second order forward-backward dynamical system investigated in \cite{b-c-dyn-sec-ord} in relation to the monotone inclusion problem
$$0\in Ax+Dx.$$
On the other hand, when particularized to the monotone inclusion system \eqref{e31}, the differential equation \eqref{e5} reads
\begin{equation}\label{e6}
\left\{
\begin{array}{ll}
\ddot{x}(t)+\g(t)\dot{x}(t)+x(t)=\prox_{\l(t) f}\big(x(t)-\l(t) \nabla g(x(t))-\l(t)\b(t)\nabla \psi(x(t))\big)\\
x(0)=u_0,\,\dot{x}(0)=v_0,
\end{array}
\right.
\end{equation}
where we made used of the fact that the resolvent of the subdifferential of a proper, convex and lower semicontinuous function is the proximal point operator of the latter. In case $f=0$, $\gamma$ is constant and $\lambda$ is also constant and identical to $1$, \eqref{e6} leads to the differential equation that has been investigated in \cite{att-cza-16} and \cite{b-c-aa}. 

The first part of the paper is devoted to the proof of the existence and uniqueness of (locally) absolutely continuous trajectories generated by the dynamical system \eqref{e5}; an important ingredient for this analysis is the Cauchy-Lipschitz-Picard Theorem (see \cite{haraux, sontag}). The proof of the convergence of the trajectories to a solution of \eqref{e4} is the main result of the manuscript. Provided that a condition expressed in terms of the Fitzpatrick function of the cocoercive operator $B$ is fulfilled, 
we prove weak ergodic convergence of the orbits.  Furthermore, we show that, if the operator $A$ is strongly monotone, then one obtains even strong (non-ergodic) convergence for the generated trajectories.

In the remaining of this section, we explain the notations we used up to this point and will use throughout the paper (see \cite{bo-van, bauschke-book, simons}).

The real Hilbert space ${\mathcal H}$ is endowed with \textit{inner product} $\langle\cdot,\cdot\rangle$ and associated \textit{norm}
$\|\cdot\|=\sqrt{\langle \cdot,\cdot\rangle}$. The \textit{normal cone} of of a set $S\subseteq {\mathcal H}$ is defined by $N_S(x)=\{u\in {\mathcal H}:\langle y-x,u\rangle\leq 0 \ \forall y\in S\}$, if $x\in S$ and $N_S(x)=\emptyset$
for $x\notin S$. The following characterization of the elements of the normal cone of a nonempty set by means of its support function will 
be used several times in the paper: for $x\in S$, $u\in N_S(x)$ if and only if $\sigma_S(u)=\langle x,u\rangle$, where $\sigma_S : {\mathcal H} \rightarrow \R\cup\{+\infty\}$ is defined by $\sigma_S(u)=\sup_{y\in S}\langle y,u\rangle$.

Let $A:{\mathcal H}\rightrightarrows {\mathcal H}$ be a set-valued operator. We denote by
$\gr A=\{(x,u)\in {\mathcal H}\times {\mathcal H}:u\in Ax\}$ its \emph{graph}, by $\dom A=\{x \in {\mathcal H} : Ax \neq \emptyset\}$ its
\emph{domain} and by $\ran A=\{u\in {\mathcal H}: \exists x\in {\mathcal H} \mbox{ s.t. }u\in Ax\}$ its \emph{range}. The notation $\zer A=\{x\in {\mathcal H}: 0\in Ax\}$ stands for the \textit{set of zeros} of the operator $A$. 
We say that $A$ is \emph{monotone} if $\langle x-y,u-v\rangle\geq 0$ for all $(x,u),(y,v)\in\gr A$. Further, a monotone operator $A$ is said to be
\emph{maximally monotone}, if there exists no proper monotone extension of the graph of $A$ on ${\mathcal H}\times {\mathcal H}$. 
The following characterization of the zeros of a maximally monotone operator will be crucial in the 
asymptotic analysis of \eqref{e5}: if $A$ is maximally monotone, then 
\begin{equation*}
z\in\zer A \mbox{ if and only if }\langle u-z,w\rangle\geq 0\mbox{ for all }(u,w)\in \gr A.
\end{equation*}

The \emph{resolvent} of $A$, $J_A:{\mathcal H} \rightrightarrows {\mathcal H}$, is defined by $p\in J_A(x)$ if and only if 
$x\in p+Ap$. Moreover, if $A$ is maximally monotone, then $J_A:{\mathcal H} \rightarrow {\mathcal H}$ is single-valued and maximally monotone
(cf. \cite[Proposition 23.7 and Corollary 23.10]{bauschke-book}). We will also use the \emph{Yosida approximation} of the operator $A$,
which is defined for $\alpha>0$ by $A_{\alpha}=\frac{1}{\alpha}(\id -J_{\alpha A})$,  where
$\id :{\mathcal H} \rightarrow {\mathcal H}, \id(x) = x$ for all $x \in {\mathcal H}$, is the \textit{identity operator} on ${\mathcal H}$.

The notion of \emph{Fitzpatrick function} associated to a monotone operator $A$ will be important in the formulation of the condition 
under which the convergence of the trajectories is achieved. It is defined as
$$\varphi_A:{\mathcal H}\times {\mathcal H}\rightarrow \oR, \ \varphi_A(x,u)=\sup_{(y,v)\in\gr A}\{\langle x,v\rangle+\langle y,u\rangle-\langle y,v\rangle\},$$
and it is a convex and lower semicontinuous function. Introduced by Fitzpatrick in \cite{fitz},
this notion played in the last years a crucial role in the investigation of maximality of monotone operators by means of convex analysis specific tools
(see \cite{bauschke-book, bausch-m-s, bo-van, simons} and the references therein).
We notice that, if $A$ is maximally monotone, then $\varphi_A$ is proper and 
$$\varphi_A(x,u)\geq \langle x,u\rangle \ \forall (x,u)\in {\mathcal H}\times {\mathcal H},$$
with equality if and only if $(x,u)\in\gr A$. We refer the reader to \cite{bausch-m-s} for explicit formulae of Fitzpatrick functions associated to particular classes of monotone operators.

Let $\gamma>0$ be arbitrary. A single-valued operator $A:{\mathcal H}\rightarrow {\mathcal H}$ is said to be \textit{$\gamma$-cocoercive}, if
$\langle x-y,Ax-Ay\rangle\geq \gamma\|Ax-Ay\|^2$ for all $(x,y)\in {\mathcal H}\times {\mathcal H}$, and \textit{$\gamma$-Lipschitz continuous},
if $\|Ax-Ay\|\leq \gamma\|x-y\|$ for all $(x,y)\in {\mathcal H}\times {\mathcal H}$.

For a proper, convex and lower semicontinuous function $f:{\mathcal H}\rightarrow\R\cup\{+\infty\}$, its (convex) subdifferential at $x\in {\mathcal H}$ is defined as
$$\partial f(x)=\{u\in {\mathcal H}:f(y)\geq f(x)+\<u,y-x\> \ \forall y\in {\mathcal H}\}.$$ When seen as a set-valued mapping, it is a
maximally monotone operator  and its resolvent is given by $J_{\gamma \partial f}=\prox_{\gamma f}$ (see \cite{bauschke-book}),
where $\prox_{\gamma f}:{\mathcal H}\rightarrow {\mathcal H}$,
\begin{equation}\label{prox-def}\prox\nolimits_{\gamma f}(x)=\argmin_{y\in {\mathcal H}}\left \{f(y)+\frac{1}{2\gamma}\|y-x\|^2\right\},
\end{equation}
denotes the proximal point operator of $f$.

\section{Existence and uniqueness of the trajectory}\label{sec2}

We start by specifying which type of solutions are we considering in the analysis of the dynamical system \eqref{e5}.  

\begin{definition}\label{str-sol}\rm We say that $x:[0,+\infty)\rightarrow {{\mathcal H}}$ is a strong global solution of \eqref{e5}, if the
following properties are satisfied:

(i) $x,\dot x:[0,+\infty)\rightarrow {{\mathcal H}}$ are locally absolutely continuous, in other words, absolutely continuous on each interval $[0,b]$ for $0<b<+\infty$;

(ii) $\ddot{x}(t)+\g(t)\dot{x}(t)+\big(x(t)-J_{\l(t) A}(x(t)-\l(t) D(x(t))-\l(t)\b(t)B(x(t)))\big)=0$ for almost every $t\geq0$;

(iii) $x(0)=u_0$ and $\dot x(0)=v_0$.
\end{definition}

For proving existence and uniqueness of the strong global solutions of \eqref{e5}, we use the Cauchy-Lipschitz-Picard Theorem for absolutely continues trajectories (see for example
\cite[Proposition 6.2.1]{haraux}, \cite[Theorem 54]{sontag}). The key argument is that one can rewrite \eqref{e5} as a particular first order dynamical system in a suitably chosen product space (see also \cite{alv-att-bolte-red}).

To this end we make the following assumption:
$$(H1):\,\, \g, \l,\b:[0,+\infty)\To(0,+\infty)\mbox{ are continuous on each interval } [0,b],\mbox{ for }0<b<+\infty,$$
which also describes the framework in which we will carry out the convergence analysis in the forthcoming sections.

\begin{theorem}\label{t1} Suppose that $\g,\l$ and $\b$ satisfy $(H1)$. Then for every $u_0,v_0\in {\mathcal H}$ there exists a unique strong global solution of $(\ref{e5}).$
\end{theorem}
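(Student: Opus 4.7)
The plan is to recast \eqref{e5} as a first-order system on the product Hilbert space $\mathcal{H}\times\mathcal{H}$ and verify the hypotheses of the Cauchy-Lipschitz-Picard theorem on every bounded interval $[0,b]$. Setting $Y(t)=(x(t),\dot x(t))$ and
\[
F(t,x,v)=\bigl(v,\ -\gamma(t)v-x+J_{\lambda(t)A}\bigl(x-\lambda(t)D(x)-\lambda(t)\beta(t)B(x)\bigr)\bigr),
\]
the system \eqref{e5} becomes $\dot Y(t)=F(t,Y(t))$ with $Y(0)=(u_0,v_0)$, and strong global solutions in the sense of Definition~\ref{str-sol} correspond exactly to locally absolutely continuous $Y$ satisfying this first-order ODE almost everywhere.

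The key technical step is a uniform-in-$t$ Lipschitz estimate for $F$ on each compact time interval. Because $D$ is $L_D^{-1}$-cocoercive and $B$ is $L_B^{-1}$-cocoercive, both are Lipschitz continuous with constants $L_D$ and $L_B$ respectively, and the resolvent $J_{\lambda(t)A}$ is nonexpansive since $A$ is maximally monotone. Combining these, for all $Y_1=(x_1,v_1)$, $Y_2=(x_2,v_2)$ and all $t\in[0,b]$,
\[
\|F(t,Y_1)-F(t,Y_2)\|\leq \bigl(1+\gamma(t)+\lambda(t)\bigl(1+L_D+\beta(t)L_B\bigr)\bigr)\|Y_1-Y_2\|.
\]
Hypothesis $(H1)$ makes $\gamma,\lambda,\beta$ continuous, hence bounded on $[0,b]$, so a single constant $K_b$ works for the whole interval. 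Measurability of $t\mapsto F(t,Y)$ for fixed $Y$ follows from continuity of $\gamma,\lambda,\beta$ together with joint continuity of $(\mu,y)\mapsto J_{\mu A}(y)$, a standard property of resolvents of maximally monotone operators. Applying the Cauchy-Lipschitz-Picard theorem (\cite[Proposition~6.2.1]{haraux}, \cite[Theorem~54]{sontag}) then yields a unique locally absolutely continuous solution $Y$ on a maximal interval $[0,T_{\max})$.

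To upgrade to a global solution, I would show $T_{\max}=+\infty$ by ruling out finite-time blow-up. The same Lipschitz estimate together with local boundedness of $t\mapsto F(t,0)$ yields a sublinear growth bound $\|F(t,Y)\|\leq a_b + K_b\|Y\|$ for $t\in[0,b]$, after which a standard Gronwall inequality applied to $\|Y(t)\|$ bounds the trajectory on every finite interval, so the solution extends to all of $[0,+\infty)$.

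The main obstacle is really only the uniform Lipschitz estimate: one must carefully track how nonexpansiveness of $J_{\lambda(t)A}$ interacts with the Lipschitz constants of $D$, $B$ and the time-varying coefficients, using $(H1)$ to absorb the $t$-dependence into a single constant on each compact interval. Once this is packaged cleanly, the rest is routine ODE machinery, and the argument does not rely on any finer structure of $A$ beyond maximality.
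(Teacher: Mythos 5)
Your proposal is correct and follows essentially the same route as the paper: rewrite \eqref{e5} as a first-order system for $(x,\dot x)$ on ${\mathcal H}\times{\mathcal H}$, use nonexpansiveness of $J_{\lambda(t)A}$ and the Lipschitz continuity of the cocoercive operators $D,B$ to get a Lipschitz constant for $F(t,\cdot,\cdot)$ that $(H1)$ makes locally integrable, check local integrability of $t\mapsto F(t,u,v)$ (where, like you, the paper relies on the continuity of the resolvent in the step-size parameter, via the Yosida approximation), and invoke the Cauchy--Lipschitz--Picard theorem. The only cosmetic difference is that the version of that theorem cited in the paper already yields a solution on all of $[0,+\infty)$ once the Lipschitz modulus is in $L^1_{loc}$, so your separate maximal-interval/Gronwall continuation step is not needed.
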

\begin{proof}
Define $X:[0,+\infty)\To {\mathcal H}\times {\mathcal H}$ as $X(t)=(x(t),\dot{x}(t)).$ Then (\ref{e5}) is equivalent to
\begin{equation}\label{firstorder}
\left\{
\begin{array}{ll}
\dot{X}(t)=F(t, X(t))\\
X(0)=(u_0,v_0),
\end{array}
\right.
\end{equation}
where $F(t,u,v)=\big(v,-\g(t)v-u+J_{\l(t) A}(u-\l(t) D(u)-\l(t)\b(t)B(u))\big).$

First we show that $F(t,\cdot,\cdot)$ is Lipschitz continuous with a Lipschitz constant $L(t)\in L^1_{loc}([0,+\infty))$, for every $t\geq 0$. Indeed, 
\begin{align*}
\|F(t,u,v)-F(t,\bar{u},\bar{v})\|= & \ \sqrt{\|v-\bar{v}\|^2+\|\g(t)(\bar{v}-v)+(\bar{u}-u)+(J_{\l(t) A}(s)-J_{\l(t) A}(\bar{s}))\|^2} \\
\le & \ \sqrt{\|v-\bar{v}\|^2+2\|\g(t)(\bar{v}-v)+(\bar{u}-u)\|^2+2\|J_{\l(t) A}(s)-J_{\l(t) A}(\bar{s})\|^2} \\
\le & \ \sqrt{(1+4\g^2(t))\|v-\bar{v}\|^2+4\|(\bar{u}-u)\|^2+2\|J_{\l(t) A}(s)-J_{\l(t) A}(\bar{s})\|^2},
\end{align*}
where $s=u-\l(t) D(u)-\l(t)\b(t)B(u)$ and $\bar{s}=\bar{u}-\l(t) D(\bar{u})-\l(t)\b(t)B(\bar{u}).$

By using the nonexpansivity of $J_{\l(t) A}$ we get
\begin{align*}
\|J_{\l(t) A}(s)-J_{\l(t) A}(\bar{s})\|\le & \ \|(u-\bar{u})+\l(t) (D(\bar{u})-D(u))+\l(t)\b(t)(B(\bar{u})-B(u)\|\\
\le & \ (1+\l(t)L_D+\l(t)\b(t)L_B)\|u-\bar{u}\|.
\end{align*}
Hence,
\begin{align*}
\|F(t,u,v)-F(t,\bar{u},\bar{v})\| \le & \ \sqrt{(1+4\g^2(t))\|v-\bar{v}\|^2+(4+2(1+\l(t)L_D+\l(t)\b(t)L_B)^2)\|u-\bar{u}\|^2}\\
\le & \ \sqrt{5+4\g^2(t)+2(1+\l(t)L_D+\l(t)\b(t)L_B)^2}\sqrt{\|u-\bar{u}\|^2+\|v-\bar{v}\|^2} \\
\le & \ (\sqrt{5}+2\g(t)+\sqrt{2}(1+\l(t)L_D+\l(t)\b(t)L_B))\|(u,v)-(\bar{u},\bar{v})\|.
\end{align*}
Since $\g,\l,\b\in L^1_{loc}([0,+\infty))$, it follows that  $$L(t):=\sqrt{5}+2\g(t)+\sqrt{2}(1+\l(t)L_D+\l(t)\b(t)L_B)$$
is also locally integrable on $[0,+\infty)$.

Next we show that $F(\cdot,u,v)\in L^1_{loc}([0,+\infty),{\mathcal H}\times {\mathcal H})$ for all $u,v\in {\mathcal H}.$ We fix $u,v\in {\mathcal H}$ and $b>0$, and notice that
\begin{align*}
\int_0^b\|F(t,u,v)\|dt= & \ \int_0^b\sqrt{\|v\|^2+\|\g(t)v+u-J_{\l(t) A}(u-\l(t) D(u)-\l(t)\b(t)B(u))\|^2}dt\\
\le & \int_0^b\sqrt{(1+2\g^2(t))\|v\|^2+4\|u\|^2+4\|J_{\l(t) A}(u-\l(t) D(u)-\l(t)\b(t)B(u))\|^2}dt.
\end{align*}

According to $(H1)$, there exist positive numbers $\underline{\l}$ and $\underline{\b}$ such that $0<\underline{\l}\le\l(t)$ and $0<\underline{\b}\le\b(t)$ for all $t\in[0,b]$. Hence
\begin{align*}
\|J_{\l(t) A}(u-\l(t) D(u)-\l(t)\b(t)B(u))\| & =\\
\|J_{\l(t) A}(u-\l(t) D(u)-\l(t)\b(t)B(u))-J_{\l(t) A}(u-\underline{\l} D(u)-\underline{\l}\underline{\b}B(u))+J_{\l(t) A}(u-\underline{\l} D(u)-\underline{\l}\underline{\b}B(u))\| & \le\\
(\l(t)-\underline{\l})\|D(u)\|+(\l(t)\b(t)-\underline{\l}\underline{\b})\|B(u)\|+\|J_{\l(t) A}(u-\underline{\l} D(u)-\underline{\l}\underline{\b}B(u))\|.&
\end{align*}
In addition,
\begin{align*}
 \|J_{\l(t) A}(u-\underline{\l} D(u)-\underline{\l}\underline{\b}B(u))\| & =\\
\|J_{\l(t) A}(u-\underline{\l} D(u)-\underline{\l}\underline{\b}B(u))-J_{\underline{\l} A}(u-\underline{\l} D(u)-\underline{\l}\underline{\b}B(u))+J_{\underline{\l} A}(u-\underline{\l} D(u)-\underline{\l}\underline{\b}B(u))\| & \le\\
(\l(t)-\underline{\l})\|A_{\underline{\l}}(u-\underline{\l} D(u)-\underline{\l}\underline{\b}B(u))\|+\|J_{\underline{\l} A}(u-\underline{\l} D(u)-\underline{\l}\underline{\b}B(u))\|,
\end{align*}
where the last inequality follows from the Lipschitz property of the resolvent operator as a function of the step size, 
which basically follows by combining \cite[Proposition 2.6]{brezis} and \cite[Proposition 23.28]{bauschke-book} (see also \cite[Proposition 3.1]{abbas-att-sv}).
Hence,
\begin{align*}
\int_0^b\|F(t,u,v)\|dt\le & \ \int_0^b\left((1+\sqrt{2}\g(t))\|v\|+2\|u\|+2(\l(t)-\underline{\l})\|D(u)\|+2(\l(t)\b(t)-\underline{\l}\underline{\b})\|B(u)\|\right)dt  \\
& \ +\int_0^b \left(2(\l(t)-\underline{\l})\|A_{\underline{\l}}(u-\underline{\l} D(u)-\underline{\l}\underline{\b}B(u))\|+2\|J_{\underline{\l} A}(u-\underline{\l} D(u)-\underline{\l}\underline{\b}B(u))\|\right)dt.
\end{align*}

Hence, $F(\cdot,u,v)\in L^1_{loc}([0,+\infty),{\mathcal H}\times {\mathcal H})$ for all $u,v\in {\mathcal H}.$ 
The conclusion of the theorem follows by applying the Cauchy-Lipschitz-Picard theorem to the first order dynamical system \eqref{firstorder}.

\end{proof}

\section{Some preparatory lemmas}\label{sec3}

In this section we provide some preparatory lemmas which will be used when proving the convergence of the trajectories generated by the dynamical system \eqref{e5}. We start by recalling two central results; see for example \cite[Lemma 5.1]{abbas-att-sv} and \cite[Lemma 5.2]{abbas-att-sv}, respectively.

\begin{lemma}\label{fejer-cont1} Suppose that $F:[0,+\infty)\rightarrow\R$ is locally absolutely continuous and bounded below and that
there exists $G\in L^1([0,+\infty))$ such that for almost every $t \in [0,+\infty)$ $$\frac{d}{dt}F(t)\leq G(t).$$
Then there exists $\lim_{t\rightarrow \infty} F(t)\in\R$.
\end{lemma}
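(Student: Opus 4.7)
The plan is to reduce the statement to the elementary fact that a bounded, monotone real function has a limit at infinity, by absorbing the integrable error term $G$ into the definition of an auxiliary function. Concretely, I would introduce
$$H(t) := F(t) - \int_0^t G(s)\,ds.$$
Since $F$ is locally absolutely continuous and $G\in L^1([0,+\infty))$ (so its indefinite integral is absolutely continuous on every bounded interval), $H$ is locally absolutely continuous on $[0,+\infty)$. Differentiating almost everywhere via the fundamental theorem of calculus for absolutely continuous functions gives
$$\dot H(t) = \dot F(t) - G(t) \le 0 \quad \text{for a.e. } t\ge 0,$$
so $H$ is nonincreasing on $[0,+\infty)$.

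Next I would argue that $H$ is bounded below. The $L^1$ assumption on $G$ gives $\bigl|\int_0^t G(s)\,ds\bigr|\le \|G\|_{L^1}$ for every $t\ge 0$, and combined with the hypothesis that $F$ is bounded below, one gets $H(t)\ge \inf F - \|G\|_{L^1}$ for all $t\ge 0$. A nonincreasing function that is bounded below admits a finite limit at infinity; denote it $\ell\in\R$.

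To finish, since $G\in L^1([0,+\infty))$, the integral $\int_0^t G(s)\,ds$ converges to some $I\in\R$ as $t\to+\infty$. Therefore
$$\lim_{t\to+\infty} F(t) = \lim_{t\to+\infty}\Bigl(H(t) + \int_0^t G(s)\,ds\Bigr) = \ell + I \in \R,$$
which is the desired conclusion. There is no real obstacle here; the only point requiring care is verifying that the computation $\dot H = \dot F - G$ is valid almost everywhere, which is immediate once one notes that indefinite integrals of $L^1$ functions are absolutely continuous with derivative equal to the integrand a.e.
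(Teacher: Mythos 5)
Your proof is correct, and it is the standard argument: the paper itself does not prove this lemma but simply recalls it from the literature (citing Lemma 5.1 of Abbas--Attouch--Svaiter), where essentially the same reduction is used. Introducing $H(t)=F(t)-\int_0^t G(s)\,ds$, checking that $H$ is nonincreasing and bounded below, and then adding back the convergent integral of $G$ is exactly the intended reasoning, and all the measure-theoretic points (local absolute continuity of the indefinite integral, $H(b)-H(a)=\int_a^b \dot H\le 0$) are handled correctly.
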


\begin{lemma}\label{fejer-cont2}  If $1 \leq p < \infty$, $1 \leq r \leq \infty$, $F:[0,+\infty)\rightarrow[0,+\infty)$ is
locally absolutely continuous, $F\in L^p([0,+\infty))$, $G:[0,+\infty)\rightarrow\R$, $G\in  L^r([0,+\infty))$ and
for almost every $t \in [0,+\infty)$ $$\frac{d}{dt}F(t)\leq G(t),$$ then $\lim_{t\rightarrow +\infty} F(t)=0$.
\end{lemma}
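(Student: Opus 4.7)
The plan is to prove $F(t)\to 0$ by contradiction, combining the differential inequality (which gives an equicontinuity-type upper bound on the increments of $F$) with the $L^p$-integrability (which constrains how often $F$ can be large on sets of positive length).

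First, I would establish the following one-sided equicontinuity estimate: for every $\varepsilon>0$ there exists $\eta>0$ such that for all $0\le s\le t$ with $t-s\le \eta$,
\[
F(t)-F(s) \;\le\; \varepsilon.
\]
This follows from local absolute continuity of $F$ and the hypothesis $\frac{d}{dt}F\le G$ a.e., since
\[
F(t)-F(s) \;=\; \int_s^t F'(u)\,du \;\le\; \int_s^t G(u)\,du \;\le\; \int_s^t |G(u)|\,du,
\]
and the rightmost quantity is uniformly small for $t-s$ small: when $r\in(1,+\infty]$, H\"older's inequality with conjugate exponent $r'$ gives $\int_s^t|G|\le (t-s)^{1/r'}\|G\|_r$, while the endpoint case $r=1$ is handled by absolute continuity of the Lebesgue integral of $|G|\in L^1$.

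Second, suppose for contradiction that $F(t)\not\to 0$. Then there exist $\varepsilon>0$ and a sequence $t_n\to+\infty$ with $F(t_n)>2\varepsilon$. Let $\eta>0$ be the value furnished by the previous step for this $\varepsilon$. By a greedy extraction (possible since $t_n\to+\infty$), I may assume the intervals $I_n:=[t_n-\eta,\,t_n]$ are pairwise disjoint. For every $s\in I_n$ the estimate of step one yields $F(t_n)-F(s)\le \varepsilon$, hence $F(s)\ge F(t_n)-\varepsilon>\varepsilon$. Raising to the $p$-th power and summing,
\[
\int_0^{+\infty}F(t)^p\,dt \;\ge\; \sum_{n\ge 1}\int_{I_n}F(t)^p\,dt \;\ge\; \sum_{n\ge 1}\eta\,\varepsilon^p \;=\;+\infty,
\]
which contradicts $F\in L^p([0,+\infty))$ since $p<+\infty$. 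This completes the proof.

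The conceptually delicate point is the uniform treatment of the full range $r\in[1,+\infty]$ in step one: for $r>1$ the bound is the direct H\"older estimate and is routine, but the endpoint $r=1$ requires the separate observation that the integral of an $L^1$ function is absolutely continuous (the factor $(t-s)^{1/r'}$ degenerates to the constant $1$ and must be replaced by absolute continuity). Everything else is bookkeeping: the positivity $F\ge 0$ is not used explicitly in the argument above, but it is implicit in our interpretation of $F\in L^p$ and in the cleanness of the "volume" contradiction.
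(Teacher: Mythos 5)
Your proof is correct. Note that the paper itself does not prove this lemma; it only recalls it from the literature (citing Abbas--Attouch--Svaiter, Lemma 5.2), and your argument is essentially the standard one used there: the one-sided uniform continuity estimate $F(t)-F(s)\le\int_s^t G\le\int_s^t|G|$ (via H\"older for $r>1$ and absolute continuity of the integral for $r=1$), combined with disjoint intervals of fixed length on which $F>\varepsilon$, contradicting $F\in L^p([0,+\infty))$ with $p<\infty$.
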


\begin{lemma}\label{l1}
Suppose that $(H1)$ holds and let $x$ be the unique strong global solution of \eqref{e5}. Take $(x^*,w)\in \gr(A+D+N_C)$ such that $w=v+Dx^*+p$, where $v\in Ax^*$ and $p\in N_C(x^*).$ For every $t\ge 0$ consider the function $h(t)=\frac12\|x(t)-x^*\|^2.$ 
Then the following inequality holds for almost every $t\ge 0$: 
\begin{align}\label{e7}
& \ddot{h}(t)+\g(t)\dot{h}(t)+\l(t)\left(\frac{1}{L_D}-\l(t)\right)\|D(x(t))-Dx^*\|^2-\|\dot{x}(t)\|^2\le \nonumber\\
& \l(t)\b(t)\!\!\left(\sup_{u\in C}\varphi_B\!\!\left(u,\frac{p}{\b(t)}\right)-\sigma_C\left(\frac{p}{\b(t)}\right)\!\!\right)+\!\l^2(t)\|Dx^*+v\|^2\!+\l(t)\<w,x^*-x(t)\>\!+\frac{\l^2(t)\b^2(t)}{2}\|B(x(t))\|^2.
\end{align}
\end{lemma}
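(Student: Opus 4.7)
The plan is to compute $\ddot{h}(t)+\g(t)\dot{h}(t)-\|\dot{x}(t)\|^2$ exactly via \eqref{e5} and then to estimate the resulting single inner product by combining four ingredients in sequence: monotonicity of $A$ at the resolvent point, cocoercivity of $D$, the defining Fitzpatrick inequality for $B$, and Young's inequality with carefully tuned parameters. Setting $y(t):=J_{\l(t)A}(x(t)-\l(t)D(x(t))-\l(t)\b(t)B(x(t)))$, \eqref{e5} gives $\ddot{x}(t)+\g(t)\dot{x}(t)=y(t)-x(t)$; differentiating $h$ almost everywhere and using the identity $\langle y(t)-x(t),x(t)-x^*\rangle=-\|x(t)-y(t)\|^2-\langle x(t)-y(t),y(t)-x^*\rangle$ reduces the task to bounding $-\langle x(t)-y(t),y(t)-x^*\rangle$, with $-\|x(t)-y(t)\|^2$ kept aside to absorb the $\|y(t)-x(t)\|^2$ residuals Young's inequality will later produce.

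The defining inclusion $\l(t)^{-1}(x(t)-y(t))-D(x(t))-\b(t)B(x(t))\in Ay(t)$, paired with $v\in Ax^*$ via $A$-monotonicity, bounds the targeted inner product by $-\l(t)\langle D(x(t))+v,y(t)-x^*\rangle-\l(t)\b(t)\langle B(x(t)),y(t)-x^*\rangle$. I split $y(t)-x^*=(y(t)-x(t))+(x(t)-x^*)$ in both brackets and also split $D(x(t))+v=(D(x(t))-Dx^*)+(Dx^*+v)$ in the $D$-bracket: cocoercivity on the $(D(x(t))-Dx^*,x(t)-x^*)$ pairing produces $-\frac{\l(t)}{L_D}\|D(x(t))-Dx^*\|^2$, while the identity $Dx^*+v=w-p$ on the $(Dx^*+v,x(t)-x^*)$ pairing yields $\l(t)\langle w,x^*-x(t)\rangle$ plus an auxiliary $\l(t)\langle p,x(t)-x^*\rangle$ to be cancelled below.

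For the $(x(t)-x^*)$-part of the $B$-bracket I apply the Fitzpatrick inequality $\varphi_B(x^*,p/\b(t))\geq \langle x^*,B(x(t))\rangle+\langle x(t),p/\b(t)\rangle-\langle x(t),B(x(t))\rangle$, rearrange, and multiply by $\l(t)\b(t)$; combined with $\sigma_C(p/\b(t))=\langle x^*,p/\b(t)\rangle$ (since $p\in N_C(x^*)$) and the trivial $\varphi_B(x^*,\cdot)\leq \sup_{u\in C}\varphi_B(u,\cdot)$ (since $x^*\in C$), this produces the desired $\l(t)\b(t)(\sup_{u\in C}\varphi_B(u,p/\b(t))-\sigma_C(p/\b(t)))$ term plus $\l(t)\langle p,x^*-x(t)\rangle$, which cancels the auxiliary from the $D$-step. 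The three remaining $(\cdot,y(t)-x(t))$ cross products are handled by Young's inequality with parameter $\alpha=2$ on the two $D$-pieces (producing $\l^2(t)\|D(x(t))-Dx^*\|^2$, $\l^2(t)\|Dx^*+v\|^2$, and $\tfrac{1}{4}\|y(t)-x(t)\|^2$ each) and $\alpha=1$ on the $B$-piece (producing $\tfrac{1}{2}\l^2(t)\b^2(t)\|B(x(t))\|^2+\tfrac{1}{2}\|y(t)-x(t)\|^2$). The three residuals sum to exactly $\|y(t)-x(t)\|^2$, cancelling the reserved $-\|x(t)-y(t)\|^2$; combining $-\frac{\l(t)}{L_D}\|D(x(t))-Dx^*\|^2+\l^2(t)\|D(x(t))-Dx^*\|^2=-\l(t)(L_D^{-1}-\l(t))\|D(x(t))-Dx^*\|^2$ and moving it to the left-hand side yields \eqref{e7}.

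The main delicate point is constant bookkeeping: the Young parameters are not free but forced by the target coefficients $\l^2(t),\,\l^2(t),\,\tfrac{1}{2}\l^2(t)\b^2(t)$ on the RHS norm terms; the three $\|y(t)-x(t)\|^2$ residuals must sum to exactly $1$; and the Fitzpatrick base point must be chosen as $x^*$ so that $p\in N_C(x^*)$ both identifies $\langle x^*,p/\b(t)\rangle$ with $\sigma_C(p/\b(t))$ and produces the $\l(t)\langle p,\cdot\rangle$ cancellation against the $D$-step.
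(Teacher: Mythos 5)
Your proof is correct and follows essentially the same route as the paper: monotonicity of $A$ at the resolvent point $y(t)=\ddot{x}(t)+\g(t)\dot{x}(t)+x(t)$, splitting into the $(x(t)-x^*)$-pairing (handled by cocoercivity of $D$, the substitution $Dx^*+v=w-p$, and the Fitzpatrick inequality at $(x^*,p/\b(t))$) and the acceleration pairing (handled by Young's inequality). The only cosmetic difference is that the paper applies Young once to the combined vector $D(x(t))+\b(t)B(x(t))+v$ and splits the squared norm afterwards, whereas you split first and tune the Young parameters termwise; both yield identical constants.
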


\begin{proof} We have $\dot{h}(t)=\<\dot{x}(t),x(t)-x^*\>$ and $\ddot{h}(t)=\<\ddot{x}(t),x(t)-x^*\>+\|\dot{x}(t)\|^2$ for every $t \geq 0$. By using the definition of the resolvent, the differential equation in (\ref{e5}) can be written for almost every $t \geq 0$ as
$$x(t)-\l(t) D(x(t))-\l(t)\b(t)B(x(t))\in \ddot{x}(t)+\g(t)\dot{x}(t)+x(t)+\l(t) A(\ddot{x}(t)+\g(t)\dot{x}(t)+x(t))$$
or, equivalently,
\begin{equation}\label{def-res}-\frac{1}{\l(t)}\ddot{x}(t)-\frac{\g(t)}{\l(t)}\dot{x}(t)-D(x(t))-\b(t)B(x(t))\in A(\ddot{x}(t)+\g(t)\dot{x}(t)+x(t)).\end{equation}
Since $v\in Ax^*$ and $A$ is monotone, we get for almost every $t \geq 0$
$$\left\<v+\frac{1}{\l(t)}\ddot{x}(t)+\frac{\g(t)}{\l(t)}\dot{x}(t)+D(x(t))+\b(t)B(x(t)), x^*-\ddot{x}(t)-\g(t)\dot{x}(t)-x(t)\right\>\ge0.$$
It follows that
\begin{align}\label{11prim}
\l(t)\<D(x(t))+\b(t)B(x(t))+v, x^*-\ddot{x}(t)-\g(t)\dot{x}(t)-x(t)\> & \ge \nonumber\\
\<\ddot{x}(t)+\g(t)\dot{x}(t),-x^*+\ddot{x}(t)+\g(t)\dot{x}(t)+x(t)\> = \ddot{h}(t)+\g(t)\dot{h}(t)+\|\ddot{x}(t)+\g(t)\dot{x}(t)\|^2-\|\dot{x}(t)\|^2&
\end{align}
for almost every $t \geq 0$. Hence, for almost every $t \geq 0$
\begin{align*}
\ddot{h}(t)+\g(t)\dot{h}(t)-\|\dot{x}(t)\|^2\le \l(t)\<D(x(t))+\b(t)B(x(t))+v, x^*-x(t)\>&\\
+\l(t)\<D(x(t))+\b(t)B(x(t))+v, -\ddot{x}(t)-\g(t)\dot{x}(t)\>-\|\ddot{x}(t)+\g(t)\dot{x}(t)\|^2 & \leq \\
\l(t)\<D(x(t))+\b(t)B(x(t))+v, x^*-x(t)\>+\frac{\l^2(t)}{4}\|D(x(t))+\b(t)B(x(t))+v\|^2,&
\end{align*}
and from here, by using mean inequalities,
\begin{align*}
\ddot{h}(t)+\g(t)\dot{h}(t)-\|\dot{x}(t)\|^2\le & \ \l(t)\<D(x(t))+\b(t)B(x(t))+v, x^*-x(t)\>+\\
& \ \frac{\l^2(t)\b^2(t)}{2}\|B(x(t))\|^2+\l^2(t)\|Dx^*+v\|^2+\l^2(t)\|D(x(t))-Dx^*\|^2.
\end{align*}
Since $v=w-Dx^*-p$, we obtain for the first summand of the term on the right-hand side of the above inequality for every $t \geq 0$ the following estimate
\begin{align*}
\l(t)\<D(x(t))+\b(t)B(x(t))+v, x^*-x(t)\> & = \\
\l(t)\<D(x(t))+\b(t)B(x(t))+w-Dx^*-p, x^*-x(t)\> & = \\
\l(t)\<D(x(t))-Dx^*,x^*-x(t)\>+\l(t)\<w,x^*-x(t)\>+ &\\
\l(t)\b(t)\left[\<B(x(t)),x^*\>+\left\<\frac{p}{\b(t)},x(t)\right\>-\<B(x(t)),x(t)\>-\left\<\frac{p}{\b(t)},x^*\right\>\right] &\le \\
-\frac{\l(t)}{L_D}\|D(x(t))-Dx^*\|^2+\l(t)\<w,x^*-x(t)\>+\l(t)\b(t)\left(\sup_{u\in C}\varphi_B\left(u,\frac{p}{\b(t)}\right)-\sigma_C\left(\frac{p}{\b(t)}\right)\right).&
\end{align*}
Hence, for almost every $t \geq 0$, we have
\begin{align*}
\ddot{h}(t)+\g(t)\dot{h}(t)-\|\dot{x}(t)\|^2\le & \ \frac{\l^2(t)\b^2(t)}{2}\|B(x(t))\|^2+\l^2(t)\|Dx^*+v\|^2+\l^2(t)\|D(x(t))-Dx^*\|^2\\
& \ -\frac{\l(t)}{L_D}\|D(x(t))-Dx^*\|^2+\l(t)\<w,x^*-x(t)\>\\
& \ +\l(t)\b(t)\left(\sup_{u\in C}\varphi_B\left(u,\frac{p}{\b(t)}\right)-\sigma_C\left(\frac{p}{\b(t)}\right)\right),
\end{align*}
which is nothing else than the desired conclusion.
\end{proof}

\begin{lemma}\label{l2}
Suppose that $(H1)$ holds and let $x$ be the unique strong global solution of \eqref{e5}. Take $x^*\in C\cap\dom A$ and $v\in Ax^*.$ For every $t\ge 0$ consider the function $h(t)=\frac12\|x(t)-x^*\|^2.$ 
Then for every $\e>0$ the following inequality holds for almost every $t\ge 0$:
\begin{align}\label{e8}
& \ddot{h}(t)+\g(t)\dot{h}(t)+\frac{1+2\e}{2+2\e}\|\ddot{x}(t)+\g(t)\dot{x}(t)\|^2-\|\dot{x}(t)\|^2+\frac{\e\l(t)\b(t)}{1+\e}\<B(x(t)),x(t)-x^*\>\le \nonumber\\
& \l(t)\b(t)\left(\frac{1+\e}{2}\l(t)\b(t)-\frac{1}{(1+\e)L_B}\right)\|B(x(t))\|^2+\l(t)\<D(x(t))+v,x^*-\ddot{x}(t)-\g(t)\dot{x}(t)-x(t)\>.
\end{align}
\end{lemma}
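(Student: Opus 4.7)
The approach mirrors the derivation of Lemma \ref{l1}, but now the cocoercive operator $B$ is used (rather than the Fitzpatrick bound) to control its inner product, and Young's inequality is tuned via the parameter $\varepsilon$ so as to absorb part of $\|\ddot x(t)+\gamma(t)\dot x(t)\|^{2}$. The starting point is again the resolvent identity \eqref{def-res}, together with the fact that, since $x^*\in C=\zer B$, cocoercivity of $B$ reads
\[
\langle B(x(t)),x(t)-x^*\rangle=\langle B(x(t))-B(x^*),x(t)-x^*\rangle\ge \frac{1}{L_B}\|B(x(t))\|^{2}.
\]

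First I would repeat, verbatim, the monotonicity step from Lemma \ref{l1}: applying monotonicity of $A$ to the pair given by \eqref{def-res} and $(x^*,v)\in\gr A$, multiplying by $\lambda(t)>0$, and using the identities for $\dot h(t),\ddot h(t)$ to reach the intermediate estimate
\begin{align*}
\ddot h(t)+\gamma(t)\dot h(t)+\|\ddot x(t)+\gamma(t)\dot x(t)\|^{2}-\|\dot x(t)\|^{2}
\le{}& \lambda(t)\langle D(x(t))+v,\,x^*-\ddot x(t)-\gamma(t)\dot x(t)-x(t)\rangle \\
&+\lambda(t)\beta(t)\langle B(x(t)),\,x^*-\ddot x(t)-\gamma(t)\dot x(t)-x(t)\rangle .
\end{align*}
This is exactly the analogue of \eqref{11prim} and requires no new ideas.

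Next I would split the $B$-term into $\lambda(t)\beta(t)\langle B(x(t)),x^*-x(t)\rangle$ and $-\lambda(t)\beta(t)\langle B(x(t)),\ddot x(t)+\gamma(t)\dot x(t)\rangle$. For the first piece I would write $1=\tfrac{1}{1+\varepsilon}+\tfrac{\varepsilon}{1+\varepsilon}$, bound the $\tfrac{1}{1+\varepsilon}$-share by $-\tfrac{\lambda(t)\beta(t)}{(1+\varepsilon)L_B}\|B(x(t))\|^{2}$ via cocoercivity of $B$, and leave the $\tfrac{\varepsilon}{1+\varepsilon}$-share as $-\tfrac{\varepsilon\lambda(t)\beta(t)}{1+\varepsilon}\langle B(x(t)),x(t)-x^*\rangle$, which will be moved to the left-hand side. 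For the second piece I would apply Young's inequality with parameter $\alpha=1+\varepsilon$, giving
\[
-\lambda(t)\beta(t)\langle B(x(t)),\ddot x(t)+\gamma(t)\dot x(t)\rangle
\le \frac{(1+\varepsilon)\lambda^{2}(t)\beta^{2}(t)}{2}\|B(x(t))\|^{2}+\frac{1}{2(1+\varepsilon)}\|\ddot x(t)+\gamma(t)\dot x(t)\|^{2}.
\]

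Finally I would collect terms. Transferring $\tfrac{1}{2(1+\varepsilon)}\|\ddot x(t)+\gamma(t)\dot x(t)\|^{2}$ to the left-hand side turns the coefficient of $\|\ddot x(t)+\gamma(t)\dot x(t)\|^{2}$ into $1-\tfrac{1}{2(1+\varepsilon)}=\tfrac{1+2\varepsilon}{2+2\varepsilon}$; combining the two $\|B(x(t))\|^{2}$ contributions yields the factor $\tfrac{1+\varepsilon}{2}\lambda(t)\beta(t)-\tfrac{1}{(1+\varepsilon)L_B}$, and moving the remaining $-\tfrac{\varepsilon\lambda(t)\beta(t)}{1+\varepsilon}\langle B(x(t)),x(t)-x^*\rangle$ across the inequality gives precisely \eqref{e8}. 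There is no real obstacle; the only delicate point is matching coefficients, which is dictated by the requirement that the $\|\ddot x(t)+\gamma(t)\dot x(t)\|^{2}$-coefficient on the right of Young's inequality be $\tfrac{1}{2(1+\varepsilon)}$ and forces the choice $\alpha=1+\varepsilon$.
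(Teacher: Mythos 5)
Your proposal is correct and follows essentially the same route as the paper's proof: starting from the intermediate inequality \eqref{11prim}, splitting the $B$-term, applying cocoercivity to the $\tfrac{1}{1+\e}$-share of $\langle B(x(t)),x^*-x(t)\rangle$, and using Young's inequality with parameter $1+\e$ on the term involving $\ddot x(t)+\g(t)\dot x(t)$. The coefficient bookkeeping matches the paper exactly.
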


\begin{proof} Let be $\e >0$ fixed. According to \eqref{11prim} in the proof of the above lemma, we have for almost every $t \geq 0$
\begin{align*}
\ddot{h}(t)+\g(t)\dot{h}(t)+\|\ddot{x}(t)+\g(t)\dot{x}(t)\|^2-\|\dot{x}(t)\|^2\le & \ \l(t)\b(t)\<B(x(t)), x^*-x(t)\>+ \\
& \ \l(t)\b(t)\<B(x(t)), -\ddot{x}(t)-\g(t)\dot{x}(t)\>+ \\
& \ \l(t)\<D(x(t))+v, x^*-\ddot{x}(t)-\g(t)\dot{x}(t)-x(t)\>.
\end{align*}
Since $B$ is $\frac{1}{L_B}$-cocoercive and $Bx^*=0$ we have $\<B(x(t)), x^*-x(t)\>\le -\frac{1}{L_B}\|B(x(t))\|^2$, hence
$$\l(t)\b(t)\<B(x(t)), x^*-x(t)\>\le -\frac{\l(t)\b(t)}{(1+\e)L_B}\|B(x(t))\|^2+\frac{\e}{1+\e}\l(t)\b(t)\<B(x(t)), x^*-x(t)\>,$$
for every $t \geq 0$.
Consequently,
\begin{align*}
& \ddot{h}(t)+\g(t)\dot{h}(t)+\|\ddot{x}(t)+\g(t)\dot{x}(t)\|^2-\|\dot{x}(t)\|^2+\frac{\e\l(t)\b(t)}{1+\e}\<B(x(t)),x(t)-x^*\>\le -\frac{\l(t)\b(t)}{(1+\e)L_B}\|B(x(t))\|^2\\ 
& +\l(t)\b(t)\<B(x(t)), -\ddot{x}(t)-\g(t)\dot{x}(t)\> +\l(t)\<D(x(t))+v, x^*-\ddot{x}(t)-\g(t)\dot{x}(t)-x(t)\>,
\end{align*}
which, combined with
$$\l(t)\b(t)\<B(x(t)), -\ddot{x}(t)-\g(t)\dot{x}(t)\>\le \frac{1}{2(1+\e)}\|\ddot{x}(t)+\g(t)\dot{x}(t)\|^2+\frac{(1+\e)\l^2(t)\b^2(t)}{2}\|B(x(t))\|^2,$$
implies for almost every $t \geq 0$ relation \eqref{e8}.
\end{proof}

\begin{lemma}\label{l3} Suppose that $(H1)$ holds and let $x$ be the unique strong global solution of \eqref{e5}. Furthermore, suppose that $\lim\sup_{t\To +\infty} \l(t)\b(t)< \frac{1}{L_B}.$ Take $x^*\in C\cap\dom A$ and $v\in Ax^*.$  For every $t\ge 0$ consider the function $h(t)=\frac12\|x(t)-x^*\|^2.$
Then there exist $a,b,c>0$ and $t_0>0$ such that for almost every $t\ge t_0$ the following inequality holds:
\begin{align}\label{e9}
& \ddot{h}(t)+\g(t)\dot{h}(t)+c\|\ddot{x}(t)+\g(t)\dot{x}(t)\|^2+a\l(t)\b(t)\Big(\<B(x(t)),x(t)-x^*\>+\|B(x(t))\|^2\Big)\le \nonumber \\
& \left(b\l^2(t)-\frac{\l(t)}{L_D}\right)\|D(x(t))-Dx^*\|^2+\l(t)\<Dx^*+v,x^*-x(t)\>+b\l^2(t)\|Dx^*+v\|^2+\|\dot{x}(t)\|^2.
\end{align}
\end{lemma}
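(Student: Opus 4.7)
The plan is to take the inequality \eqref{e8} of Lemma~\ref{l2} as the starting point and to further process its right-hand side so as to match the target form \eqref{e9}. The free parameter $\e>0$ in \eqref{e8} will be fixed only at the very end; at that stage the hypothesis $\limsup_{t\to+\infty}\l(t)\b(t)<1/L_B$ is the decisive ingredient, since it produces a uniform strict bound $\l(t)\b(t)L_B\leq\eta<1$ for all sufficiently large $t$, which leaves room to absorb the $\|B(x(t))\|^2$ term into the left-hand side.

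The key algebraic steps are as follows. I would split the last summand of \eqref{e8} into
$\l\<D(x(t))+v,\,x^*-x(t)\>+\l\<D(x(t))+v,\,-\ddot x(t)-\g(t)\dot x(t)\>$ and rewrite $D(x(t))+v=(D(x(t))-Dx^*)+(Dx^*+v)$. The $L_D^{-1}$-cocoercivity of $D$ yields $\l\<D(x(t))-Dx^*,\,x^*-x(t)\>\leq-\tfrac{\l}{L_D}\|D(x(t))-Dx^*\|^2$, while the piece involving $Dx^*+v$ gives the required $\l\<Dx^*+v,\,x^*-x(t)\>$ term. To the second summand I would apply Young's inequality with parameter $\mu>0$,
$$\l\<D(x(t))+v,\,-\ddot x-\g\dot x\>\leq \mu\|\ddot x+\g\dot x\|^2+\tfrac{\l^2}{4\mu}\|D(x(t))+v\|^2,$$
followed by $\|D(x(t))+v\|^2\leq 2\|D(x(t))-Dx^*\|^2+2\|Dx^*+v\|^2$. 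Setting $b:=1/(2\mu)$ then produces exactly the $(b\l^2-\l/L_D)\|D(x(t))-Dx^*\|^2$ coefficient and the $b\l^2\|Dx^*+v\|^2$ remainder, while the surplus $\mu\|\ddot x+\g\dot x\|^2$ is transferred against the $\tfrac{1+2\e}{2+2\e}\|\ddot x+\g\dot x\|^2$ already present on the left, yielding a coefficient $c:=\tfrac{1+2\e}{2+2\e}-\mu$.

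The main obstacle is the coordinated choice of $\e$, $\mu$ and $t_0$ that makes every constant strictly positive. Given $\eta\in(0,1)$ and $t_0>0$ with $\l(t)\b(t)L_B\leq\eta$ for $t\geq t_0$, I would pick $\e>0$ small enough that $(1+\e)^2\eta<2$; this makes $a_1:=\tfrac{2-(1+\e)^2\eta}{2(1+\e)L_B}>0$ and, for $t\geq t_0$, bounds the quantity $\tfrac{1}{(1+\e)L_B}-\tfrac{1+\e}{2}\l(t)\b(t)$ from below by $a_1$, so the $\|B(x(t))\|^2$ coefficient on the right of \eqref{e8} is at most $-a_1\l(t)\b(t)$. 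Since $x^*\in C=\zer B$ and $B$ is $L_B^{-1}$-cocoercive, $\<B(x(t)),\,x(t)-x^*\>\geq\tfrac{1}{L_B}\|B(x(t))\|^2\geq 0$, so taking $a:=\min\{a_1,\,\e/(1+\e)\}>0$ allows the two $B$-terms from \eqref{e8} to be collapsed into $a\l\b\bigl(\<B(x(t)),\,x(t)-x^*\>+\|B(x(t))\|^2\bigr)$ on the left-hand side. Finally, choosing any $\mu\in(0,\,\tfrac{1+2\e}{2+2\e})$ ensures $c>0$, and the collected inequality is exactly \eqref{e9}. The strict margin $\eta<1$ provided by $\limsup_{t\to+\infty}\l(t)\b(t)<1/L_B$ is precisely the slack that enables this coordinated tuning.
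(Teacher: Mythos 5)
Your proposal is correct and follows essentially the same route as the paper: both start from \eqref{e8}, treat the last summand via Young's inequality combined with the cocoercivity of $D$ and the splitting $D(x(t))+v=(D(x(t))-Dx^*)+(Dx^*+v)$, absorb the resulting $\|\ddot x(t)+\g(t)\dot x(t)\|^2$ surplus into the left-hand side, and invoke $\limsup_{t\to+\infty}\l(t)\b(t)<1/L_B$ to make the $\|B(x(t))\|^2$ coefficient nonpositive for $t$ large. The only differences are bookkeeping (you carry a separate Young parameter $\mu$ where the paper hard-wires $\tfrac{\e}{4(1+\e)}$, and you collapse the $B$-terms via $a=\min\{a_1,\e/(1+\e)\}$ and the nonnegativity of $\<B(x(t)),x(t)-x^*\>$ rather than by adding $\tfrac{\e}{1+\e}\l(t)\b(t)\|B(x(t))\|^2$ to both sides); just note that the later use of this lemma in Theorem \ref{t2} needs $c>\tfrac12$, which your free choice of $\mu$ permits (take $\mu<\tfrac{\e}{2+2\e}$) but does not automatically deliver.
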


\begin{proof} Let be $\e > 0$. According to the previous lemma, \eqref{e8} holds for almost every $t \geq 0$. We estimate the last summand in the 
right-hand side of \eqref{e8} by using the mean inequality and the cocoercieveness of $D$. For every $t \geq 0$ we obtain
\begin{align*}
\l(t)\<D(x(t))+v,x^*-\ddot{x}(t)-\g(t)\dot{x}(t)-x(t)\> & \le\\
\frac{\e}{4(1+\e)}\|\ddot{x}(t)+\g(t)\dot{x}(t)\|^2+ \frac{\l^2(t)(1+\e)}{\e}\|D(x(t))+v\|^2+\l(t)\<D(x(t))+v,x^*-x(t)\> & =\\
\frac{\e}{4(1+\e)}\|\ddot{x}(t)+\g(t)\dot{x}(t)\|^2+\frac{\l^2(t)(1+\e)}{\e}\|D(x(t))+v\|^2+ & \\
\l(t)\<D(x(t))-Dx^*,x^*-x(t)\>+\l(t)\<Dx^*+v,x^*-x(t)\> & \le\\
\frac{\e}{4(1+\e)}\|\ddot{x}(t)+\g(t)\dot{x}(t)\|^2+\frac{\l^2(t)(1+\e)}{\e}\|D(x(t))+v\|^2+ &\\
-\frac{\l(t)}{L_D}\|D(x(t))-Dx^*\|^2+\l(t)\<Dx^*+v,x^*-x(t)\>,&
\end{align*}
which, combined with $\|D(x(t))+v\|^2\le 2\|D(x(t))-Dx^*\|^2+2\|Dx^*+v\|^2$, implies 
\begin{align*}
& \l(t)\<D(x(t))+v,x^*-\ddot{x}(t)-\g(t)\dot{x}(t)-x(t)\>\le \frac{\e}{4(1+\e)}\|\ddot{x}(t)+\g(t)\dot{x}(t)\|^2+\\
&\left(\frac{2\l^2(t)(1+\e)}{\e}-\frac{\l(t)}{L_D}\right)\|D(x(t))-Dx^*\|^2+\frac{2\l^2(t)(1+\e)}{\e}\|Dx^*+v\|^2+\l(t)\<Dx^*+v,x^*-x(t)\>.
\end{align*}
Using the above estimate in \eqref{e8}, we obtain for almost every $t \geq 0$
\begin{align*}
& \ddot{h}(t)+\g(t)\dot{h}(t)+\frac{1+2\e}{2+2\e}\|\ddot{x}(t)+\g(t)\dot{x}(t)\|^2-\|\dot{x}(t)\|^2+\frac{\e\l(t)\b(t)}{1+\e}\<B(x(t)),x(t)-x^*\>\le\\
& \l(t)\b(t)\left(\frac{1+\e}{2}\l(t)\b(t)-\frac{1}{(1+\e)L_B}\right)\|B(x(t))\|^2+ \frac{\e}{4(1+\e)}\|\ddot{x}(t)+\g(t)\dot{x}(t)\|^2+\\
& \left(\frac{2\l^2(t)(1+\e)}{\e}-\frac{\l(t)}{L_D}\right)\|D(x(t))-Dx^*\|^2+\frac{2\l^2(t)(1+\e)}{\e}\|Dx^*+v\|^2+\l(t)\<Dx^*+v,x^*-x(t)\>
\end{align*}
or, equivalently
\begin{align*}
& \ddot{h}(t)+\g(t)\dot{h}(t)+\frac{2+3\e}{4(1+\e)}\|\ddot{x}(t)+\g(t)\dot{x}(t)\|^2-\|\dot{x}(t)\|^2+\frac{\e\l(t)\b(t)}{1+\e} \left(\<B(x(t)),x(t)-x^*\>+\|B(x(t))\|^2\right) \le \\
& \l(t)\b(t)\left(\frac{1+\e}{2}\l(t)\b(t)-\frac{1}{(1+\e)L_B}+\frac{\e}{1+\e}\right)\|B(x(t))\|^2+\left(\frac{2\l^2(t)(1+\e)}{\e}-\frac{\l(t)}{L_D}\right)\|D(x(t))-Dx^*\|^2\\
& +\frac{2\l^2(t)(1+\e)}{\e}\|Dx^*+v\|^2+\l(t)\<Dx^*+v,x^*-x(t)\>.
\end{align*}

Since $\lim\sup_{t\To +\infty} \l(t)\b(t)< \frac{1}{L_B}$, there exists $t_0>0$ such that
$$\frac{1+\e}{2}\l(t)\b(t)-\frac{1}{(1+\e)L_B}+\frac{\e}{1+\e}< \frac{1+\e}{2L_B}-\frac{1}{(1+\e)L_B}+\frac{\e}{1+\e}$$ for every $t\ge t_0.$ Further, we notice that
$$\frac{1+\e}{2L_B}-\frac{1}{(1+\e)L_B}+\frac{\e}{1+\e}\le 0$$ for every $\e\in\left(0,\sqrt{(1+L_B)^2+1}-(1+L_B)\right].$ By chosing $\e_0$ from this interval and defining 
$$a:=\frac{\e_0}{1+\e_0}, b:=\frac{2(1+\e_0)}{\e_0} \ \mbox{and} \ c:=\frac{2+3\e_0}{4(1+\e_0)},$$ 
the conclusion follows.
\end{proof}

\begin{remark}\label{rem8} {\rm In the proof of the above theorem, the choice $\e_0\le \sqrt{(1+L_B)^2+1}-(1+L_B)<\sqrt{2}-1$, implies that $a< 1-\frac{1}{\sqrt{2}}$ and $\frac12<c<\frac34-\frac{\sqrt{2}}{8}.$}
\end{remark}

\begin{lemma}\label{l4} Suppose that $(H1)$ holds and let $x$ be the unique strong global solution of \eqref{e5}. Furthermore, suppose that $\lim\sup_{t\To +\infty} \l(t)\b(t)< \frac{1}{L_B}$  and $\lim_{t\To+\infty}\l(t)=0$. Take $(x^*,w)\in \gr(A+D+N_C)$ such that $w=v+Dx^*+p$, where $v\in Ax^*$ and $p\in N_C(x^*).$ For every $t\ge 0$ consider the function $h(t)=\frac12\|x(t)-x^*\|^2.$
Then there exist $a,b,c >0$ and $t_1>0$ such that for almost every $t\ge t_1$ the following inequality holds:
\begin{align}\label{e10}
& \ddot{h}(t)+\g(t)\dot{h}(t)+c\|\ddot{x}(t)+\g(t)\dot{x}(t)\|^2+a\l(t)\b(t)\left(\frac{1}{2}\<B(x(t)),x(t)-x^*\>+\|B(x(t))\|^2\right)\le \nonumber \\
& \frac{a\l(t)\b(t)}{2}\left(\sup_{u\in C}\varphi_B\left(u,\frac{2p}{a\b(t)}\right)-\sigma_C\left(\frac{2p}{a\b(t)}\right)\right)+b\l^2(t)\|Dx^*+v\|^2+\l(t)\<w,x^*-x(t)\>+\|\dot{x}(t)\|^2.
\end{align}
\end{lemma}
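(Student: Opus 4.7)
The plan is to deduce \eqref{e10} from \eqref{e9} by correcting for the discrepancy between $Dx^*+v$ and $w$, which amounts to exactly $-p\in -N_C(x^*)$. The correction term $\l(t)\<p,x(t)-x^*\>$ is precisely the quantity that the Fitzpatrick function of $B$ is designed to control, so the result should be obtainable at the cost of an extra Fitzpatrick summand on the right-hand side.

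First I would invoke Lemma \ref{l3} directly, which applies since $x^*\in C\cap\dom A$ and $v\in Ax^*$: this produces constants $a,b,c>0$ and $t_0>0$ such that \eqref{e9} holds for almost every $t\ge t_0$. I would then exploit the new hypothesis $\l(t)\to 0$ to pick $t_1\ge t_0$ with $b\l(t)\le 1/L_D$ for every $t\ge t_1$; for such $t$ the term $\left(b\l^2(t)-\l(t)/L_D\right)\|D(x(t))-Dx^*\|^2$ is nonpositive and can be discarded from the right-hand side of \eqref{e9}.

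The central step is to rewrite
$$\l(t)\<Dx^*+v,x^*-x(t)\>=\l(t)\<w,x^*-x(t)\>+\l(t)\<p,x(t)-x^*\>$$
and bound the last summand using $\varphi_B$. Since $N_C(x^*)$ is a cone and $p\in N_C(x^*)$, the rescaled direction $\tfrac{2p}{a\b(t)}$ also lies in $N_C(x^*)$, so $\sigma_C\!\left(\tfrac{2p}{a\b(t)}\right)=\left\<x^*,\tfrac{2p}{a\b(t)}\right\>$. Applying the defining lower bound $\varphi_B(y,u)\ge\<y,Bz\>+\<z,u\>-\<z,Bz\>$ (obtained by plugging $(z,Bz)\in\gr B$ into the supremum) with $y=x^*$, $u=\tfrac{2p}{a\b(t)}$, $z=x(t)$, using $Bx^*=0$ together with $\varphi_B(x^*,\cdot)\le\sup_{u\in C}\varphi_B(u,\cdot)$, and multiplying the resulting inequality by $\tfrac{a\l(t)\b(t)}{2}$, I get
$$\l(t)\<p,x(t)-x^*\>\le\frac{a\l(t)\b(t)}{2}\left[\sup_{u\in C}\varphi_B\!\left(u,\frac{2p}{a\b(t)}\right)-\sigma_C\!\left(\frac{2p}{a\b(t)}\right)\right]+\frac{a\l(t)\b(t)}{2}\<B(x(t)),x(t)-x^*\>.$$

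Substituting this bound into the reduced form of \eqref{e9} and transferring the extra $\tfrac{a\l(t)\b(t)}{2}\<B(x(t)),x(t)-x^*\>$ summand from the right to the left, where it combines with $a\l(t)\b(t)\<B(x(t)),x(t)-x^*\>$ to leave precisely $\tfrac{a\l(t)\b(t)}{2}\<B(x(t)),x(t)-x^*\>$, recovers \eqref{e10} with the same constants $a,b,c$ as in Lemma \ref{l3}. I do not expect any serious obstacle: the only point requiring care is choosing the Fitzpatrick variable to be $\tfrac{2p}{a\b(t)}$ (rather than $\tfrac{p}{\b(t)}$ as in Lemma \ref{l1}), so that after multiplication by $\tfrac{a\l(t)\b(t)}{2}$ both the identity for $\l(t)\<p,\cdot\>$ and the desired coefficient $\tfrac{a}{2}$ on $\<B(x(t)),x(t)-x^*\>$ emerge together — a straightforward rescaling of the trick already used in the proof of Lemma \ref{l1}.
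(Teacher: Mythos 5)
Your proposal is correct and follows essentially the same route as the paper's proof: invoke Lemma \ref{l3}, use $\l(t)\to 0$ to discard the $\|D(x(t))-Dx^*\|^2$ term, and absorb the normal-cone correction $\l(t)\<p,x(t)-x^*\>$ together with half of the $\<B(x(t)),x(t)-x^*\>$ term into the Fitzpatrick expression evaluated at $\tfrac{2p}{a\b(t)}$. The only cosmetic difference is that you bound $\l(t)\<p,x(t)-x^*\>$ in isolation, whereas the paper bounds the combined quantity $\tfrac{a\l(t)\b(t)}{2}\<B(x(t)),x^*-x(t)\>+\l(t)\<Dx^*+v,x^*-x(t)\>$ in one step; the algebra is identical.
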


\begin{proof} According to Lemma \ref{l3}, there exist $a,b,c>0$ and $t_0>0$ such that for almost every $t\ge t_0$ 
the inequality (\ref{e9}) holds. Since $\lim_{t\To+\infty}\l(t)=0$, there exists $t_1\ge t_0$ such that $\l(t)\le\frac{1}{bL_D}$, hence $b\l^2(t)-\frac{\l(t)}{L_D}\le 0$ for every $t\ge t_1.$ Consequently, we can 
omit for every $t\ge t_1$ the term $\left(b\l^2(t)-\frac{\l(t)}{L_D}\right)\|D(x(t))-Dx^*\|^2$ in (\ref{e9}) and obtain that the inequality
\begin{align}\label{interm}
& \ddot{h}(t)+\g(t)\dot{h}(t)+c\|\ddot{x}(t)+\g(t)\dot{x}(t)\|^2+a\l(t)\b(t)\Big(\<B(x(t)),x(t)-x^*\>+\|B(x(t))\|^2\Big)\le \nonumber \\
& \l(t)\<Dx^*+v,x^*-x(t)\>+b\l^2(t)\|Dx^*+v\|^2+\|\dot{x}(t)\|^2
\end{align}
holds  for almost every $t\ge t_1$. 

Since $Dx^*+v=w-p$, we have for every $t \geq 0$
\begin{align*}
& \frac{a\l(t)\b(t)}{2}\<B(x(t)),x^*-x(t)\> + \l(t)\<Dx^*+v,x^*-x(t)\>=\\
& \frac{a\l(t)\b(t)}{2}\left(\<B(x(t)),x^*\>+\left\<\frac{2p}{a\b(t)},x(t)\right\>-\<B(x(t)),x(t)\>-\left\<\frac{2p}{a\b(t)},x^*\right\> \right)+ \l(t)\<w,x^*-x(t)\>\le \\
& \frac{a\l(t)\b(t)}{2}\left(\sup_{u\in C}\varphi_B\left(u,\frac{2p}{a\b(t)}\right)-\sigma_C\left(\frac{2p}{a\b(t)}\right)\right)+\l(t)\<w,x^*-x(t)\>.
\end{align*}
On the other hand, \eqref{interm} can be equivalently written for almost every $t\ge t_1$ as 
\begin{align*}
& \ddot{h}(t)+\g(t)\dot{h}(t)+c\|\ddot{x}(t)+\g(t)\dot{x}(t)\|^2+a\l(t)\b(t)\Big(\frac{1}{2}\<B(x(t)),x(t)-x^*\>+\|B(x(t))\|^2\Big)\le \nonumber \\
& \frac{a\l(t)\b(t)}{2}\<B(x(t)),x^*-x(t)\> + \l(t)\<Dx^*+v,x^*-x(t)\>+b\l^2(t)\|Dx^*+v\|^2+\|\dot{x}(t)\|^2,
\end{align*}
hence
\begin{align*}
& \ddot{h}(t)+\g(t)\dot{h}(t)+c\|\ddot{x}(t)+\g(t)\dot{x}(t)\|^2+a\l(t)\b(t)\left(\frac{1}{2}\<B(x(t)),x(t)-x^*\>+\|B(x(t))\|^2\right)\le \nonumber \\
& \frac{a\l(t)\b(t)}{2}\left(\sup_{u\in C}\varphi_B\left(u,\frac{2p}{a\b(t)}\right)-\sigma_C\left(\frac{2p}{a\b(t)}\right)\right)+b\l^2(t)\|Dx^*+v\|^2+\l(t)\<w,x^*-x(t)\>+\|\dot{x}(t)\|^2.
\end{align*}
\end{proof}

\section{Main result: the convergence of the trajectories}\label{sec4}

For the proof of the convergence of the trajectories generated by the dynamical system \eqref{e5} we will utilize the following assumptions:

$(H2):  A+N_C\mbox{ is maximally monotone and } \zer(A+D+N_C)\neq\emptyset$;

$(H3):  \l\in L^2([0,+\infty))\setminus L^1([0,+\infty))\mbox{ and } \lim_{t\To+\infty}\l(t)=0$;

$(H_{fitz}): \ \mbox{For every }p\in \ran  N_C,\, \int_0^{+\infty}\l(t)\b(t)\left(\sup_{u\in C}\varphi_B\left(u,\frac{p}{\b(t)}\right)-\sigma_C\left(\frac{p}{\b(t)}\right)\right)dt<+\infty.$

\begin{remark} \label{rm11} {\rm (i) The first assumption in $(H2)$ is fulfilled when a regularity condition which ensures the maximality of the sum of two maximally monotone operators holds. This is a widely studied topic in the literature; we refer the reader to \cite{bauschke-book, borw-06, bo-van, simons} for 
such conditions, including the classical Rockafellar's condition expressed in terms of the domains of the involved operators.

(ii) With respect to $(H_{fitz})$, we would like to remind that a similar condition formulated in terms of the Fitzpatrick function has been considered for the first time in \cite{b-c-penalty-svva} in the discrete setting. Its continuous version has been introduced in \cite{BCs} and further used also in \cite{att-cab-cz}. 

This class of conditions, widely used in the context of penalization approaches, has its origin in \cite{att-cza-10}. Here, in the particular case $C=\argmin\psi$, where $\psi:{\cal H}\rightarrow\R$ is a convex and differentiable function with Lipschitz continuous gradient and such that $\min\psi=0$, the  condition

$(H)$:  For every $p\in\ran N_C$, $\int_0^{+\infty} \lambda(t)\beta(t)\left[\psi^*\left(\frac{p}{\beta(t)}\right)-\sigma_C\left(\frac{p}{\beta(t)}\right)\right]dt<+\infty$. 

has been used in the asymptotic analysis of a coupled dynamical system with multiscale aspects. The function $\psi^*:{\cal H}\rightarrow\R\cup\{+\infty\}$, $\psi^*(u)=\sup_{x\in {\cal H}}\{\langle u,x\rangle-\psi(x)\}$, denotes the Fenchel conjugate of $\psi$.

According to \cite{bausch-m-s}, it holds
\begin{equation}\label{fitzp-subdiff-ineq}
 \varphi_{\nabla\psi }(x,u)\leq \psi(x) +\psi^*(u) \ \forall (x,u)\in {\cal H}\times {\cal H}.
\end{equation}
Since $\psi(x)=0$ for $x\in C$, condition $(H_{fitz})$ applied to $B=\nabla \psi$ is fulfilled, provided that $(H)$ is fulfilled. For several particular situations where $(H_{fitz})$ is verified (in its continuous or discrete version) we refer the reader to \cite{att-cza-10, att-cza-peyp-c, att-cza-peyp-p, banert-bot-pen, peyp-12, noun-peyp}.} 
\end{remark}

For proving the convergence of the trajectories generated by the dynamical system \eqref{e5} we will also make use of the following ergodic version of the continuous Opial Lemma (see \cite[Lemma 2.3]{att-cza-10}).

\begin{lemma}\label{opial} Let $S \subseteq { {\mathcal H}}$ be a nonempty set, $x:[0,+\infty)\rightarrow{ {\mathcal H}}$ a given map and
$\lambda:[0,+\infty)\rightarrow(0,+\infty)$ such that $\int_0^{+\infty}\lambda(t)=+\infty$. Define $\tilde x :[0, +\infty)\rightarrow { {\mathcal H}}$ by
$$\tilde x(t)=\frac{1}{\int_0^t\lambda(s)ds}\int_0^t\lambda(s)x(s)ds.$$
Assume that

(i) for every $z\in S$, $\lim_{t\rightarrow+\infty}\|x(t)-z\|$ exists;

(ii) every weak sequential cluster point of the map $\tilde x$ belongs to $S$.

\noindent Then there exists $x_{\infty}\in S$ such that $w-\lim_{t\rightarrow+\infty}\tilde x(t)=x_{\infty}$.
\end{lemma}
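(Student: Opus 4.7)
The plan is to adapt the classical (non-ergodic) Opial weak-topology argument to the averaged trajectory $\tilde x$, with one new technical ingredient: a weighted Ces\`aro mean theorem asserting that, if $f:[0,+\infty)\rightarrow\R$ is measurable with a finite limit $L$ at $+\infty$ and $\lambda:[0,+\infty)\rightarrow(0,+\infty)$ is locally integrable with $\int_0^{+\infty}\lambda=+\infty$, then $\frac{1}{\int_0^t\lambda(s)ds}\int_0^t\lambda(s)f(s)ds\rightarrow L$ as $t\rightarrow+\infty$. I would establish this auxiliary fact by the usual splitting: choose $T$ so large that $|f(s)-L|<\varepsilon$ for $s\ge T$, and observe that the contribution from $[0,T]$ vanishes when divided by $\int_0^t\lambda\rightarrow+\infty$. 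Combined with the triangle inequality $\|\tilde x(t)-z\|\le \frac{1}{\int_0^t\lambda(s)ds}\int_0^t\lambda(s)\|x(s)-z\|ds$ and assumption (i), a first application yields $\limsup_{t\rightarrow+\infty}\|\tilde x(t)-z\|\le \ell_z:=\lim_{s\rightarrow+\infty}\|x(s)-z\|$ for every $z\in S$; in particular $\tilde x$ is bounded on $[0,+\infty)$, so it admits weak sequential cluster points as $t\rightarrow+\infty$.

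The second step is to show that for every $z_1,z_2\in S$ the scalar limit $\lim_{t\rightarrow+\infty}\<\tilde x(t),z_1-z_2\>$ exists. To this end, I would invoke the polarization identity
\[
\|x(s)-z_1\|^2-\|x(s)-z_2\|^2=-2\<x(s),z_1-z_2\>+\|z_1\|^2-\|z_2\|^2,
\]
noting that (i) forces the left-hand side to converge, hence so does $s\mapsto\<x(s),z_1-z_2\>$. Another application of the weighted Ces\`aro theorem to this scalar function, whose weighted average is precisely $\<\tilde x(t),z_1-z_2\>$, then delivers the claim.

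Finally, if $\tilde x(t_n)\rightharpoonup x^1$ and $\tilde x(s_n)\rightharpoonup x^2$ are any two weak sequential cluster points of $\tilde x$, assumption (ii) places both in $S$, so evaluating the limit of $\<\tilde x(t),x^1-x^2\>$ along each of the two subsequences forces $\<x^1,x^1-x^2\>=\<x^2,x^1-x^2\>$, i.e.\ $\|x^1-x^2\|^2=0$. Uniqueness of the weak cluster point, combined with the boundedness of $\tilde x$ established in the first step, produces the desired weak limit $x_\infty\in S$. The only even mildly delicate part of the argument is the weighted Ces\`aro step; once that is in hand, everything else is a direct transcription of the classical Opial weak-topology computation with $x$ replaced by $\tilde x$.
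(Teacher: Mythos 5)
Your argument is correct; the paper does not prove this lemma itself but quotes it from Attouch--Czarnecki \cite[Lemma 2.3]{att-cza-10}, and your proof is essentially the standard one for that result: a weighted Ces\`aro averaging step, the polarization identity $\|x(s)-z_1\|^2-\|x(s)-z_2\|^2=-2\langle x(s),z_1-z_2\rangle+\|z_1\|^2-\|z_2\|^2$ to obtain convergence of $\langle \tilde x(t),z_1-z_2\rangle$, and the classical Opial uniqueness argument for weak sequential cluster points of the bounded map $\tilde x$. The only implicit hypothesis you use (local integrability of $\lambda\|x(\cdot)-z\|$ so that the constant $C_T$ in the Ces\`aro splitting is finite) is already required for $\tilde x$ to be well defined, so there is no gap.
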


We can state now the main result of this paper. 

\begin{theorem}\label{t2} Suppose that $(H1)-(H3)$ and $(H_{fitz})$ hold, and let $x$ be the unique strong global solution of \eqref{e5}. Furthermore, suppose that $\lim\sup_{t\To +\infty} \l(t)\b(t)< \frac{1}{L_B}$, $\g$ is locally absolutely continuous and for almost every $t \geq 0$ it holds
$\g(t)\ge \sqrt{2}$ and $\dot{\g}(t)\le 0$. Let $\tilde{x}:[0,+\infty)\To {\mathcal H}$ be defined by
$$\tilde{x}(t)=\frac{1}{\int_0^t\l(s)ds}\int_0^t\l(s)x(s)ds.$$
Then the following statements hold:
\begin{itemize}
\item[(i)] for every $x^*\in \zer(A+D+N_C),$ $\|x(t)-x^*\|$ converges as $t\To+\infty$; in addition,
$\dot{x},\ddot{x}\in L^2([0,+\infty),{\mathcal H}),$ $\l(\cdot)\b(\cdot)\|B(x(\cdot))\|^2\in L^1([0,+\infty))$, $\int_0^{+\infty}\l(t)\b(t)\<B(x(t)),x(t)-x^*\>dt<+\infty$, and $\lim_{t\to+\infty}\dot{x}(t)=\lim_{t\to+\infty}\dot{h}(t)=0,$ where $h(t)=\frac12\|x(t)-x^*\|^2;$
\item[(ii)] $\tilde{x}(t)$ converges weakly as $t\To+\infty$ to an element in $\zer(A+D+N_C)$;
\item[(iii)] if, additionally, $A$ is strongly monotone, then $x(t)$ converges strongly as $t\To+\infty$ to  the unique element of $\zer(A+D+N_C)$. 
\end{itemize}
\end{theorem}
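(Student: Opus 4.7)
The strategy is to prove (i)--(iii) sequentially: (i) by Lyapunov analysis of the inequality in Lemma \ref{l4}, which also supplies all the $L^1/L^2$ estimates needed later; (ii) by the ergodic Opial Lemma \ref{opial}, whose second hypothesis is verified via Lemma \ref{l1} tested against arbitrary graph elements of $A+D+N_C$; (iii) by re-running the proof of Lemma \ref{l1} under $\rho$-strong monotonicity to extract a new $\l\rho h$ term that, integrated against $\l\notin L^1$, forces $\lim h=0$.

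\textbf{Part (i).} Fix $x^*\in\zer(A+D+N_C)$ and decompose $0=v+Dx^*+p$ with $v\in Ax^*$, $p\in N_C(x^*)$. Lemma \ref{l4} with $w=0$ yields, for a.e.\ $t\ge t_1$,
\begin{equation*}
\ddot h+\g\dot h+c\|\ddot x+\g\dot x\|^2+a\l\b\big(\tfrac12\<Bx,x-x^*\>+\|Bx\|^2\big)\le G(t)+\|\dot x\|^2,
\end{equation*}
with $G\in L^1([0,+\infty))$ by $(H_{fitz})$ and $\l\in L^2$. The key algebraic identity
\begin{equation*}
c\|\ddot x+\g\dot x\|^2=c\|\ddot x\|^2+\tfrac{d}{dt}(c\g\|\dot x\|^2)-c\dot\g\|\dot x\|^2+c\g^2\|\dot x\|^2,
\end{equation*}
combined with $\dot\g\le 0$, $\g\ge\sqrt 2$ and $c>\tfrac12$ (Remark \ref{rem8}), rewrites the inequality as $\ddot h+\g\dot h+c\|\ddot x\|^2+\tfrac{d}{dt}(c\g\|\dot x\|^2)+(2c-1)\|\dot x\|^2+a\l\b(\cdots)\le G(t)$. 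Integration on $[t_1,T]$, the bound $\int_{t_1}^T\g\dot h\,ds\ge\g(T)h(T)-\g(t_1)h(t_1)$ (coming from $-\dot\g h\ge 0$), and $-\dot h(T)\le\tfrac12\|\dot x(T)\|^2+h(T)$ absorb the boundary data into $(\g-1)h(T)$ and $(c\g-\tfrac12)\|\dot x(T)\|^2$, both with strictly positive coefficients. This produces $T$-uniform bounds giving $h$ and $\|\dot x\|$ bounded, $\dot x,\ddot x\in L^2([0,+\infty),{\mathcal H})$, $\l\b\|Bx\|^2\in L^1$ and $\int_0^{+\infty}\l\b\<Bx,x-x^*\>\,dt<+\infty$. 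To obtain the existence of $\lim h$ I consider the Lyapunov quantity $F:=\dot h+\g h$: since $\dot\g h\le 0$, $\dot F\le G+\|\dot x\|^2\in L^1$, while $F\ge-\tfrac12\|\dot x\|^2+(\g-1)h$ is bounded below. Lemma \ref{fejer-cont1} gives $\ell:=\lim F\in\R$. As $\g$ is nonincreasing and $\ge\sqrt 2$, $\g(t)\to\g_\infty\in[\sqrt 2,+\infty)$; integrating the linear ODE $\dot h+\g h=\ell+o(1)$ with the factor $\exp\!\int_0^t\g$ (and L'H\^opital) yields $h(t)\to\ell/\g_\infty$ and $\dot h(t)\to 0$. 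Finally, $\tfrac{d}{dt}\|\dot x\|^2=2\<\ddot x,\dot x\>\le\|\ddot x\|^2+\|\dot x\|^2\in L^1$ combined with $\|\dot x\|^2\in L^1$ gives $\dot x(t)\to 0$ via Lemma \ref{fejer-cont2}.

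\textbf{Part (ii).} For arbitrary $(x^*,w)\in\gr(A+D+N_C)$ I rearrange Lemma \ref{l1} as an upper bound for $\l\<w,x-x^*\>$ and integrate on $[0,T]$. Each right-hand-side summand is either in $L^1$ (Fitzpatrick term by $(H_{fitz})$; $\l^2$-terms by $\l\in L^2$; $\tfrac12\l^2\b^2\|Bx\|^2\le(\sup\l\b)\,\l\b\|Bx\|^2\in L^1$ by (i) and boundedness of $\l\b$; $\|\dot x\|^2\in L^1$ by (i)), or a boundary contribution from $\ddot h,\g\dot h$ controlled by (i); the term $\l(L_D^{-1}-\l)\|Dx-Dx^*\|^2$ is eventually nonnegative (since $\l\to 0$) and can simply be dropped on the right. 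Dividing by $\int_0^T\l\to+\infty$ (from $(H3)$) gives $\<w,\tilde x(T)-x^*\>\le o(1)$. Hence any weak cluster point $x_\infty$ of $\tilde x$ satisfies $\<w,x^*-x_\infty\>\ge 0$ for every $(x^*,w)\in\gr(A+D+N_C)$. By $(H2)$ and the fact that $D$ is cocoercive with full domain, $A+D+N_C$ is maximally monotone, so the preceding inequality is exactly the characterisation of $x_\infty\in\zer(A+D+N_C)$. Lemma \ref{opial}, whose first hypothesis is (i), now delivers the weak ergodic convergence.

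\textbf{Part (iii).} When $A$ is $\rho$-strongly monotone, the monotonicity step in the proof of Lemma \ref{l1} can be sharpened, producing an additional term $\l\rho\|x^*-\ddot x-\g\dot x-x\|^2$ on the left-hand side; this addition propagates through Lemmas \ref{l2}--\ref{l4}. Using $\|x^*-\ddot x-\g\dot x-x\|^2\ge h-\|\ddot x+\g\dot x\|^2$ and $\l\to 0$ (so that eventually $\l\rho$ is absorbed into the coefficient of $\|\ddot x+\g\dot x\|^2$), the analogue of Lemma \ref{l4} with $w=0$ becomes $\ddot h+\g\dot h+\l\rho h+(\text{nonneg.})\le G(t)+\|\dot x\|^2$. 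Integrating and invoking the finite values of $\lim(\g h)$ and $\lim\dot h=0$ from (i), together with $\|\dot x\|^2,G\in L^1$, yields $\rho\int_0^{+\infty}\l(t)h(t)\,dt<+\infty$. Since $h$ has a finite limit by (i) and $\l\notin L^1$, this limit must be $0$, i.e.\ $x(t)\to x^*$ strongly. The main technical obstacle throughout is part (i): recovering a scalar differential inequality amenable to Lemma \ref{fejer-cont1} from the second-order dissipation in Lemma \ref{l4} requires both the identity for $c\|\ddot x+\g\dot x\|^2$ and the choice of Lyapunov $\dot h+\g h$, and both crucially use the pair $\dot\g\le 0$, $\g\ge\sqrt 2$; weakening either hypothesis would prevent the elimination of the $\|\dot x\|^2$ term on the right-hand side of Lemma \ref{l4}.
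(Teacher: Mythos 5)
Your proposal is correct and follows essentially the same route as the paper: Lemma \ref{l4} combined with the rewriting of $c\|\ddot x+\g\dot x\|^2$ to produce the Lyapunov quantity $\dot h+\g h+c\g\|\dot x\|^2$ for (i), the ergodic Opial lemma fed by Lemma \ref{l1} and the maximal monotonicity of $A+D+N_C$ for (ii), and the strengthened monotonicity inequality with $\|x^*-x\|^2\le 2\|\ddot x+\g\dot x\|^2+2\|x^*-x-\ddot x-\g\dot x\|^2$ for (iii). The only deviations are cosmetic and sound: you obtain boundedness of $h$ and $\dot x$ by absorbing the boundary terms directly rather than via the paper's integrating-factor (Gronwall) step, and you deduce $\lim h$ by solving $\dot h+\g h=\ell+o(1)$ with L'H\^opital, whereas the paper first establishes $\dot x\to 0$ and $\dot h\to 0$ and then extracts $\lim\g h$.
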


\begin{proof}
(i)  
Let be $x^*\in\zer(A+D+N_C)$, thus $(x^*,0) \in \gr(A+D+N_C)$ and $0=v+Dx^*+p$ for $v\in Ax^*$ and $p\in N_C(x^*)$. According to Lemma \ref{l4} and Remark \ref{rem8}, there exist $a,b,c >0$, with $a< 1-\frac{1}{\sqrt{2}}$ and $\frac12<c<\frac34-\frac{\sqrt{2}}{8}$, and $t_1>0$ such that for almost every $t\ge t_1$ it holds
\begin{align*}
& \ddot{h}(t)+\g(t)\dot{h}(t)+c\|\ddot{x}(t)+\g(t)\dot{x}(t)\|^2+a\l(t)\b(t)\left(\frac{1}{2}\<B(x(t)),x(t)-x^*\>+\|B(x(t))\|^2\right)\le  \\
& \frac{a\l(t)\b(t)}{2}\left(\sup_{u\in C}\varphi_B\left(u,\frac{2p}{a\b(t)}\right)-\sigma_C\left(\frac{2p}{a\b(t)}\right)\right)+b\l^2(t)\|Dx^*+v\|^2+\|\dot{x}(t)\|^2.
\end{align*}
On the other hand, since
$$\g(t)\dot{h}(t)=\frac{d}{dt}(\g(t)h(t))-\dot{\g(t)}{h}(t)\ge \frac{d}{dt}(\g(t)h(t)),$$ it holds
\begin{align*}
\ddot{h}(t)+\g(t)\dot{h}(t)+c\|\ddot{x}(t)+\g(t)\dot{x}(t)\|^2-\|\dot{x}(t)\|^2 & \ge \\
\frac{d}{dt}\left(\dot{h}(t)+\g(t)h(t)+c\g(t)\|\dot{x}(t)\|^2\right)+(c\g^2(t)-c\dot{\g}(t)-1)\|\dot{x}(t)\|^2+c\|\ddot{x}(t)\|^2&
\end{align*}
for every $t \geq 0$.

By combining these two inequalities,  we obtain for almost every $t\ge t_1$ 
\begin{align}\label{ineq-l2}\frac{d}{dt}\left(\dot{h}(t)+\g(t)h(t)+c\g(t)\|\dot{x}(t)\|^2\right)+(c\g^2(t)-c\dot{\g}(t)-1)\|\dot{x}(t)\|^2+c\|\ddot{x}(t)\|^2+& \nonumber \\
a\frac{\l(t)\b(t)}{2}\<B(x(t)),x(t)-x^*\>+a\l(t)\b(t)\|B(x(t))\|^2 & \le \nonumber \\
\frac{a\l(t)\b(t)}{2}\left(\sup_{u\in C}\varphi_B\left(u,\frac{2p}{a\b(t)}\right)-\sigma_C\left(\frac{2p}{a\b(t)}\right)\right)+b\l^2(t)\|Dx^*+v\|^2.&
\end{align}
Since $\g(t)\ge\sqrt{2}$ and $c>\frac12$ one has $c\g^2(t)-c\dot{\g}(t)-1\ge 2c-1>0$ for almost every $t \geq 0$. 
By using that $\<B(x(t)), x(t) - x^*\>\ge \frac{1}{L_B}\|B(x(t))\|^2$ for every $t \geq 0$ and by neglecting the nonnegative terms on the left-hand side of \eqref{ineq-l2}, we get for almost every $t\ge t_1$ 
\begin{align*}\frac{d}{dt}\left(\dot{h}(t)+\g(t)h(t)+c\g(t)\|\dot{x}(t)\|^2\right) & \le \\
\frac{a\l(t)\b(t)}{2}\left(\sup_{u\in C}\varphi_B\left(u,\frac{2p}{a\b(t)}\right)-\sigma_C\left(\frac{2p}{a\b(t)}\right)\right)+b\l^2(t)\|Dx^*+v\|^2.&
\end{align*}
Further, by integration, we easily derive that there exists $M > 0$ such that for every $t\geq 0$
\begin{equation}\label{ineq}\dot{h}(t)+\g(t)h(t)+c\g(t)\|\dot{x}(t)\|^2\leq M.\end{equation}
Hence, $\dot{h}(t)+\g(t)h(t)\le M$, which leads to $\dot{h}(t)+\sqrt{2}h(t)\le M$ for all $t\ge 0.$
Consequently, $\frac{d}{dt}(h(t)e^{\sqrt{2}t})\le M e^{\sqrt{2}t}$; therefore, by integrating this inequality from $0$ to $T>0$, one obtains
$$h(T)\le \frac{M}{\sqrt{2}}-\frac{M}{\sqrt{2}}e^{\sqrt{2}(-T)}+h(0)e^{\sqrt{2}(-T)},$$
which shows  that $h$ is bounded, hence $x$ is bounded. Combining this with 
$$\langle \dot x(t),x(t)-x^*\rangle+c\sqrt{2}\|\dot{x}(t)\|^2\leq M \ \forall t \geq 0,$$
which is a consequence of \eqref{ineq}, we derive that $\dot x$ is bounded, too. 

In conclusion, $t \mapsto \dot{h}(t)+\g(t)h(t)+c\g(t)\|\dot{x}(t)\|^2$ is bounded from below. By taking into account relation \eqref{ineq-l2} and applying Lemma \ref{fejer-cont1}, we obtain
\begin{equation}\label{e-lim}\lim_{t\To+\infty}\big(\dot{h}(t)+\g(t)h(t)+c\g(t)\|\dot{x}(t)\|^2\big)\in \R\end{equation} and
$$\int_0^{+\infty}\|\dot{x}(t)\|^2dt,\,\int_0^{+\infty}\|\ddot{x}(t)\|^2dt,\, \int_0^{+\infty}\l(t)\b(t)\<B(x(t)),x(t)-x^*\>dt,\,\int_0^{+\infty}\l(t)\b(t)\|B(x(t))\|^2\in\R.$$

Since 
$$\frac{d}{dt}\left(\frac12\|\dot{x}(t)\|^2\right)=\<\ddot{x}(t),\dot{x}(t)\>\le \frac12\|\ddot{x}(t)\|^2+\frac12\|\dot{x}(t)\|^2$$
for every $t \geq 0$ and the function on the right-hand side of the above inequality belongs to  $L^1([0,+\infty))$, according to Lemma \ref{fejer-cont2} one has $\lim_{t\to+\infty}\dot{x}(t)=0.$ Further, the equality $\dot{h}(t)=\<\dot{x}(t),x(t)-x^*\>$ leads to
$-\|\dot{x}(t)\|\|x(t)-x^*\|\le \dot{h}(t)\le \|\dot{x}(t)\|\|x(t)-x^*\|$ for every $t \geq 0$. Since  $\lim_{t\to+\infty}\dot{x}(t)=0$ and $\|x(\cdot)-x^*\|$ is bounded, one obtains $\lim_{t\to+\infty}\dot{h}(t)=0.$

From $\lim_{t\To+\infty}(\dot{h}(t)+\g(t)h(t)+c\g(t)\|\dot{x}(t)\|^2)\in \R$, $\lim_{t\To+\infty}\dot{h}(t)=0$ and $\lim_{t\To+\infty}c\g(t)\|\dot{x}(t)\|^2=0$, one obtains that the limit $\lim_{t\To+\infty}\g(t)h(t)$ exists and it is a finite number. On the other hand, since the limit $\lim_{t\To+\infty} \g(t) \ge \sqrt{2}$ exists and it is a positive number, one can conclude that 
$\lim_{t\To+\infty}h(t)$ exists and it is finite. Consequently, $\|x(t)-x^*\|$ converges as $t\To+\infty.$

(ii) We show that every weak sequential limit point of $\tilde{x}$ belongs to $\zer(A+D+N_C).$ Indeed, let $x_0$ be a weak sequential limit point of $\tilde{x}$; thus, there exists a sequence $(s_n)_{n \geq 0}$ with  $s_n\To+\infty$ and $\tilde{x}(s_n)\To x_0$ as $n\To+\infty.$

Take an arbitrary $(x^*,w)\in\gr(A+D+N_C)$ with $w=v+Dx^*+p$, $v\in Ax^*$ and $p\in N_C(x^*)$. 
Since $\lim_{t\To+\infty}\l(t)=0$, there exists $t_2>0$ such that for every $t\ge t_2$ one has $\l(t)\left(\frac{1}{L_D}-\l(t)\right)\ge 0$. From (\ref{e7}) we obtain
\begin{align*}
\ddot{h}(t)+\g(t)\dot{h}(t) & \le \\
\l(t)\b(t)\left(\sup_{u\in C}\varphi_B\left(u,\frac{p}{\b(t)}\right)-\sigma_C\left(\frac{p}{\b(t)}\right)\right)+\l^2(t)\|Dx^*+v\|^2+ &\\
\frac{\l^2(t)\b^2(t)}{2}\|B(x(t))\|^2+\|\dot{x}(t)\|^2 +\l(t)\<w,x^*-x(t)\>.\\
\end{align*}
for every $t \geq t_2$. By integrating from $t_2$ to $T>t_2$, we get from here
$$\int_{t_2}^T\ddot{h}(t)+\g(t)\dot{h}(t)dt\le L+\left\<w,\left(\int_{t_2}^T\l(t)dt\right)x^*-\int_{t_2}^T\l(t)x(t)dt\right\>,$$
where
\begin{align*}
L:= & \int_{t_2}^T \l(t)\b(t)\left(\sup_{u\in C}\varphi_B\left(u,\frac{p}{\b(t)}\right)-\sigma_C\left(\frac{p}{\b(t)}\right)\right)dt\\
& +\int_{t_2}^T\left( \l^2(t)\|Dx^*+v\|^2+\frac{\l^2(t)\b^2(t)}{2}\|B(x(t))\|^2+\|\dot{x}(t)\|^2 \right)dt.
\end{align*}

Since $\g(t)\dot{h}(t)\ge \frac{d}{dt}(\g(t)h(t))$ and $\g(T)h(T)\ge 0$, we obtain
$$-\g(t_2)h(t_2)\le L+\left\<w,\left(\int_{t_2}^T\l(t)dt\right)x^*-\int_{t_2}^T\l(t)x(t)dt\right\>-\dot{h}(T)+\dot{h}(t_2),$$
hence
$$\frac{-\g(t_2)h(t_2)}{\int_0^T\l(t)dt}\le\frac{L_1 -\dot{h}(T)}{\int_0^T\l(t)dt}+\left\<w,x^*-\frac{\int_{0}^T\l(t)x(t)dt}{\int_0^T\l(t)dt}\right\>,$$
where $L_1:=L+\dot{h}(t_2)+\left\<w,\int_0^{t_2}\l(t)x(t)dt-\left(\int_0^{t_2}\l(t)dt\right)x^*\right\> \in \R.$

We choose in the above inequality $T=s_n$ for those $n$ for which $s_n > t_2$, let $n$ converge to $+\infty$ and so, by taking into account that $\int_0^{+\infty}\l(t)dt=+\infty$ and $\dot{h}$ is bounded, we obtain
$$\<w,x^*-x_0\>\ge 0.$$
Since $(x^*,w)\in \gr(A+D+N_C)$ was arbitrary  chosen, it follows that $x_0\in \zer(A+D+N_C).$ Hence, by Lemma \ref{opial}, $\tilde{x}(t)$ converges weakly as $t\To+\infty$ to an element in $\zer(A+D+N_C)$. 

(iii) Assume now that $A$ is  strongly monotone, i.e. there exists $\eta>0$ such that
$$\<u^*-v^*,x-y\>\ge\eta\|x-y\|^2,\,\mbox{ for all }(x,u^*),(y,v^*)\in \gr(A).$$
Let $x^*$ be the unique element of $\zer(A+D+N_C).$ Then $0=v+Dx^*+p$, for $v\in Ax^*$ and $p\in  N_C(x^*).$

Since $v\in Ax^*$ and $A$ is $\eta-$strongly monotone, from \eqref{def-res} we obtain for almost every $t \geq 0$
\begin{align*}
\left\<v+\frac{1}{\l(t)}\ddot{x}(t)+\frac{\g(t)}{\l(t)}\dot{x}(t)+D(x(t))+\b(t)B(x(t)),x^*-\ddot{x}(t)-\g(t)\dot{x}(t)-x(t)\right\> & \ge\\
\eta\|x^*-\ddot{x}(t)-\g(t)\dot{x}(t)-x(t)\|^2. &
\end{align*}
By repeating the arguments in the proof of Lemma \ref{l1}, we easily derive for almost every $t \geq 0$
\begin{align*}
\ddot{h}(t)+\g(t)\dot{h}(t)-\|\dot{x}(t)\|^2+\eta\l(t)\|x^*-\ddot{x}(t)-\g(t)\dot{x}(t)-x(t)\|^2 & \le\\
\left(\l^2(t)-\frac{\l(t)}{L_D}\right)\|D(x(t))-Dx^*\|^2+\l(t)\b(t)\left(\sup_{u\in C}\varphi_B\left(u,\frac{p}{\b(t)}\right)-\sigma_C\left(\frac{p}{\b(t)}\right)\right)+ &\\
\frac{\l^2(t)\b^2(t)}{2}\|B(x(t))\|^2+\l^2(t)\|v+Dx^*\|^2. &
\end{align*}
Since $\lim_{t\To+\infty}\l(t)=0$, there exists $t_3>0$ such that $\l^2(t)-\frac{\l(t)}{L_D}\le 0$ for every $t\ge t_3$. Thus for almost every $t\geq t_3$ 
\begin{align*}
\ddot{h}(t)+\g(t)\dot{h}(t) +\eta\l(t)\|x^*-\ddot{x}(t)-\g(t)\dot{x}(t)-x(t)\|^2 & \le\\
\l(t)\b(t)\left(\sup_{u\in C}\varphi_B\left(u,\frac{p}{\b(t)}\right)-\sigma_C\left(\frac{p}{\b(t)}\right)\right)+ 
\frac{\l^2(t)\b^2(t)}{2}\|B(x(t))\|^2+\l^2(t)\|v+Dx^*\|^2 + \|\dot{x}(t)\|^2. &
\end{align*}
Combining this inequality with
$$\|x^*-x(t)\|^2\leq 2\|\ddot x(t)+\gamma(t)\dot x(t)\|^2+2\|x^*-x(t)-\ddot x(t)-\gamma(t)\dot x(t)\|^2,$$
we derive for almost every $t\geq t_3$ 
\begin{align*}
\ddot{h}(t) + \g(t)\dot{h}(t)  + \frac{\eta\l(t)}{2}\|x^*-x(t)\|^2 & \le\\
\eta\l(t)\|\ddot{x}(t)+\g(t)\dot{x}(t)\|^2 + &\\ 
\l(t)\b(t)\left(\sup_{u\in C}\varphi_B\left(u,\frac{p}{\b(t)}\right)-\sigma_C\left(\frac{p}{\b(t)}\right)\right)+\frac{\l^2(t)\b^2(t)}{2}\|B(x(t))\|^2+\l^2(t)\|v+Dx^*\|^2 + \|\dot{x}(t)\|^2. &
\end{align*}

By using that $\gamma (t)\dot h(t)\geq \frac{d}{dt}(\gamma(t)h(t))$ and $\|\ddot{x}(t)+\g(t)\dot{x}(t)\|^2\le 2\|\ddot{x}(t)\|^2+2\g^2(t)\|\dot{x}(t)\|^2$ for every $t \geq 0$, by integrating from $t_3$ to $T > t_3$, by letting $T$ converge to $+\infty$, and by using (i),  we obtain 
$$\int_0^{+\infty}\l(t)\|x^*-x(t)\|^2 dt<+\infty.$$
Since $\l\in L^2([0,+\infty))\setminus L^1([0,+\infty))$, it follows that $\|x(t)-x^*\|\To 0$ as $t\To+\infty.$
\end{proof}

\begin{remark} {\rm We want to emphasize that unlike other papers  addressing the asymptotic analysis of second order dynamical systems, and where the damping function $\gamma(t)$ was assumed to be strictly greater than $\sqrt{2}$ for all $t \geq 0$ (see \cite{att-alvarez, b-c-dyn-sec-ord}), in Theorem \ref{t2} we allow for it take this value, too.} 
\end{remark}

We close the paper by  formulating Theorem \ref{t2} in the context of the optimization problem \eqref{e1}, where we also use the relation between the assumptions $(H)$ and $(H_{fitz})$ (see Remark \ref{rm11}).

\begin{corollary}\label{c3} Consider the optimization problem \eqref{e1}. Suppose that its system of optimality conditions
$$0 \in \partial f(x) + \nabla g(x) + N_{\zer \nabla \psi}(x) $$
is solvable, that $(H1)-(H3)$ and $(H)$ hold, and let $x$ be the unique strong global solution of \eqref{e6}. Furthermore, suppose that $\lim\sup_{t\To +\infty} \l(t)\b(t)< \frac{1}{L_B}$, $\g$ is locally absolutely continuous and for almost every $t \geq 0$ it holds
$\g(t)\ge \sqrt{2}$ and $\dot{\g}(t)\le 0$. Let $\tilde{x}:[0,+\infty)\To {\mathcal H}$ be defined by
$$\tilde{x}(t)=\frac{1}{\int_0^t\l(s)ds}\int_0^t\l(s)x(s)ds.$$
Then the following statements hold:
\begin{itemize}
\item[(i)] for every $x^*\in \zer(\partial f + \nabla g + N_{\zer \nabla \psi}),$ $\|x(t)-x^*\|$ converges as $t\To+\infty$; in addition,
$\dot{x},\ddot{x}\in L^2([0,+\infty),{\mathcal H}),$ $\l(\cdot)\b(\cdot)\|\nabla \psi (x(\cdot))\|^2\in L^1([0,+\infty))$, $\int_0^{+\infty}\l(t)\b(t)\<\nabla \psi(x(t)),x(t)-x^*\>dt<+\infty$, and $\lim_{t\to+\infty}\dot{x}(t)=\lim_{t\to+\infty}\dot{h}(t)=0,$ where $h(t)=\frac12\|x(t)-x^*\|^2;$
\item[(ii)] $\tilde{x}(t)$ converges weakly as $t\To+\infty$ to an element in $\zer(\partial f + \nabla g + N_{\zer \nabla \psi})$, which is also an optimal solution of \eqref{e1};
\item[(iii)] if, additionally, $f$ is strongly convex, then $x(t)$ converges strongly as $t\To+\infty$ to  the unique element of $\zer(\partial f + \nabla g + N_{\zer \nabla \psi})$, which is the unique optimal solution of \eqref{e1}. 
\end{itemize}
\end{corollary}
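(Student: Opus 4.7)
The plan is to apply Theorem~\ref{t2} directly with the specializations $A := \partial f$, $D := \nabla g$, $B := \nabla \psi$, so that $C = \zer \nabla \psi = \argmin \psi$; in this setting $J_{\lambda A} = \prox_{\lambda f}$, which makes~\eqref{e6} the instance of~\eqref{e5} to which the theorem applies. First I would check that $\partial f$ is maximally monotone as the subdifferential of a proper convex lower semicontinuous function, and that $\nabla g$, $\nabla \psi$ are cocoercive (with constants $L_g^{-1}$, $L_\psi^{-1}$) by the Baillon--Haddad theorem. Maximal monotonicity of $\partial f + N_{\argmin\psi}$ is the first half of $(H2)$, assumed; its second half, $\zer(\partial f + \nabla g + N_{\argmin \psi}) \neq \emptyset$, is exactly the solvability of the optimality system assumed in the statement.

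The only hypothesis of Theorem~\ref{t2} that is not inherited verbatim is $(H_{fitz})$, which must be deduced from $(H)$. For this I would invoke the Fitzpatrick inequality~\eqref{fitzp-subdiff-ineq}, namely
$$\varphi_{\nabla \psi}(x,u) \le \psi(x) + \psi^*(u) \quad \forall (x,u) \in \mathcal{H}\times \mathcal{H}.$$
Translating $\psi$ by a constant (which affects neither~\eqref{e6} nor condition $(H)$), we may assume $\min \psi = 0$, so that $\psi \equiv 0$ on $C = \argmin \psi$; taking the supremum over $u \in C$ of the Fitzpatrick inequality and subtracting $\sigma_C(p/\b(t))$ gives
$$\sup_{u \in C}\varphi_{\nabla \psi}\!\left(u,\tfrac{p}{\b(t)}\right) - \sigma_C\!\left(\tfrac{p}{\b(t)}\right) \le \psi^*\!\left(\tfrac{p}{\b(t)}\right) - \sigma_C\!\left(\tfrac{p}{\b(t)}\right),$$
so multiplying by $\l(t)\b(t)$ and integrating shows $(H)\Rightarrow(H_{fitz})$, as announced in Remark~\ref{rm11}.

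With every hypothesis of Theorem~\ref{t2} in force, statement~(i) of the corollary is the literal translation of Theorem~\ref{t2}(i), and the weak ergodic convergence of $\tilde x(t)$ to some $x_\infty \in \zer(\partial f + \nabla g + N_{\argmin \psi})$ follows from Theorem~\ref{t2}(ii). To identify $x_\infty$ as a minimizer of~\eqref{e1}, I would run the equivalences~\eqref{e2}--\eqref{e31} in reverse: $x_\infty \in \zer(\partial f + \nabla g + N_{\argmin\psi})$ entails $0 \in \partial(f + g + \delta_{\argmin\psi})(x_\infty)$ by the subdifferential sum rule, hence $x_\infty$ is a global minimizer of $f + g + \delta_{\argmin\psi}$, equivalent to solving~\eqref{e1}. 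Finally, for~(iii), $\eta$-strong convexity of $f$ makes $\partial f$ an $\eta$-strongly monotone operator, so Theorem~\ref{t2}(iii) yields strong convergence of $x(t)$ to the unique element of $\zer(\partial f + \nabla g + N_{\argmin\psi})$, which is then the unique optimizer of~\eqref{e1} since $f+g$ is strongly convex on $\argmin\psi$. The only genuinely nontrivial step in the whole argument is the bookkeeping linking $(H)$ to $(H_{fitz})$ via the Fitzpatrick inequality; everything else is a direct specialization of the abstract result.
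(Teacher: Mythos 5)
Your proposal is correct and takes essentially the same route as the paper, which proves the corollary simply by specializing Theorem~\ref{t2} with $A=\partial f$, $D=\nabla g$, $B=\nabla \psi$ and invoking Remark~\ref{rm11} for the implication $(H)\Rightarrow(H_{fitz})$ via the Fitzpatrick inequality \eqref{fitzp-subdiff-ineq}. Your remaining bookkeeping is also sound: note only that to identify the weak limit as a minimizer you need just the always-valid inclusion $\partial f(x)+\nabla g(x)+N_{\argmin\psi}(x)\subseteq\partial(f+g+\delta_{\argmin\psi})(x)$ rather than the full sum rule, and that strong convexity of $f$ indeed gives strong monotonicity of $\partial f$ as required for part (iii).
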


\end{document}